\theoremstyle{dgthm}
\newtheorem{theorem}{Theorem}
\newtheorem{lemma}{Lemma}
\theoremstyle{dgdef}
\newtheorem{definition}{Definition}
\newtheorem{remark}{Remark}
\newtheorem{assumption}{Assumption}
\def\r{r}
\def\s{s}
\def\n{n}
\def\e{\epsilon}
\def\emin{{\e_{min}}}
\def\emax{{\e_{max}}}
\def\dE{\partial_\e}
\def\tE{{\triangle\e}}
\def\bdE{\bar\partial_\e}
\def\E{\mathcal{E}}
\def\R{\mathcal{R}}
\def\S{\mathcal{S}}
\def\T{\mathcal{T}}
\def\q{q}
\def\G{\mathcal{G}}
\def\d{\mathrm{d}}
\def\C{\mathcal{C}}
\def\A{\mathcal{A}}
\def\RR{\mathbb{R}}
\def\UU{\mathbb{U}}
\def\VV{\mathbb{V}}
\def\WW{\mathbb{W}}
\def\XX{\mathbb{X}}
\def\la{\langle}
\def\ra{\rangle}
\begin{document}


\title{Analysis and systematic discretization of a Fokker-Planck equation with Lorentz force}
\runningtitle{Structure-preserving discretization}

\author[1]{Vincent Bosboom}
\author*[2]{Herbert Egger}
\author[1]{Matthias Schlottbom} 
\runningauthor{V.~Bosboom et al.}
\affil[1]{\protect\raggedright 
University of Twente, Department of Applied Mathematics, Enschede, The Netherlands, e-mail: v.bosboom@utwente.nl, m.schlottbom@utwente.nl}
\affil[2]{\protect\raggedright 
Johannes Kepler Universit\"at, Department of Mathematics and Johann Radon Institute for Computational and Applied Mathematics, Linz, Austria, e-mail: herbert.egger@jku.at}


	
\abstract{The propagation of charged particles through a scattering medium in the presence of a magnetic field can be described by a Fokker-Planck equation with Lorentz force. This model is studied both, from a theoretical and a numerical point of view. A particular trace estimate is derived for the relevant function spaces to clarify the meaning of boundary values. Existence of a weak solution is then proven by the Rothe method. In the second step of our investigations, a fully practical discretization scheme is proposed based on an implicit Euler method for the energy variable
and a spherical-harmonics finite-element discretization with respect to the remaining variables. A complete error analysis of the resulting scheme is given and numerical tests are presented to illustrate the theoretical results and the performance of the proposed method. 
}

\keywords{Fokker-Planck equation, Lorentz force, Rothe method, spherical harmonics, finite elements}

\maketitle
	
\section{Introduction}
The Boltzmann transport equation is a widely used model for the propagation of particles or radiation through scattering media \cite{Akkermans2007,Ishimaru1978,Modest2003}. In the forward-peaked regime, asymptotic analysis leads to the Fokker-Planck continuous slowing-down approximation~\cite{Chandrasekhar1943,Pomraning1992}.
This equation has been used for dose calculation in radiation therapy \cite{Hensel2006,Larsen1998} in order to describe the propagation of secondary electrons generated by inelastic scattering of a primary photon beam. 
In this paper, we consider an extension of the model that includes the effect of the Lorentz force on the electron distribution in the presence of a magnetic field, which is of interest in magnetic resonance imaging guided radiotherapy \cite{de_Pooter_2021,St-Aubin_2015,Vassiliev_2010}. 
In this context, the quasi-static distribution of secondary electrons propagating through a biological medium is described by 
\begin{alignat}{2} \label{eq:fp}
-\dE (S \psi) + \s \cdot \nabla_\r \psi + 
G \cdot \s \times \nabla_\s \psi 
- T \Delta_\s \psi
& = \q \qquad \text{on } \R \times \S \times \E.
\end{alignat}
Here $\psi=\psi(\r,\s,\e)$ is the phase-space density of electrons, depending on position $\r \in \R$, propagation direction $\s \in \S$, and energy level $\e \in \E=(\emin,\emax)$,  and $\q=\q(\r,\s,\e)$ is the source density.  
Furthermore, 
$\nabla_\r \psi$ denotes the spatial gradient, and $\nabla_\s$, $\Delta_\s$ the surface gradient and Laplace-Beltrami operator on the unit sphere; see \cite{Han2013,Pomraning1992} for parametric representations of these operators.
The coefficient $G=G(\r,\e)$ represents the scaled external magnetic field, while the parameters
$T=T(\r,\e)$ and $S=S(\r,\e)$ are derived from the scattering phase function in the forward-peaked regime \cite{Pomraning1992}.
Apart from the third term on the left-hand side of \eqref{eq:fp}, the equation can be found in \cite[Eq.~(14)]{Hensel2006};
for models including the Lorentz force, see \cite[Eq.~(11)]{Bouchard_2015} and \cite[Eq.~(10)]{St-Aubin_2015} as well as \cite{de_Pooter_2021,Swan_2021}.
The equation \eqref{eq:fp} is complemented by boundary conditions
\begin{alignat}{2}
\psi &= 0 \qquad && \text{on } \Gamma_{in} \times \E 
\quad \text{and} \quad 
\R \times \S \times \{\emax\},\label{eq:ibc}
\end{alignat}
which state that electrons can only leave but not enter the phase space $\R \times \S \times \E$. 
Using standard notation, we here decompose $\Gamma = \partial\R \times \S$ into an inflow and an outflow part
\begin{align} \label{eq:Gamma}
\Gamma_{in} &= \{(\r,\s) \in \partial\R \times \S:  n(r) \cdot \s < 0\}, \qquad \Gamma_{out} = \Gamma \setminus \overline{\Gamma_{in}},
\end{align}
with $n(r)$ denoting the outward unit normal vector on $\partial\R$. 
For ease of presentation, we only consider homogeneous boundary data, but the extension to inhomogeneous conditions is straightforward due to the linearity of the problem.  
Let us briefly discuss the main contributions obtained in this manuscript.

\smallskip 
\textbf{Existence of weak solutions.}
For vanishing magnetic field $G=0$ and spatially homogeneous stopping power $S=S(\e)$, the existence of a solution to \eqref{eq:fp}--\eqref{eq:ibc} can be deduced from the results in \cite{ShengHan2013,Herty2012}, which are based on earlier work \cite{Degond_1986,Degond_1987}. 
These papers consider \eqref{eq:fp} as a stationary problem in phase-space $\R \times \S \times \E$, and the existence proofs are based on Lions' representation theorem \cite{Arendt_2023,Showalter_1997}.
%
In this manuscript, we follow a different approach: We consider \eqref{eq:fp} as an evolution problem with respect to the energy $\e$, which is interpreted as a pseudo-time variable. Following the physical background, one moves from high to low energies, and the condition $\psi(\emax)=0$ in \eqref{eq:ibc} takes the role of an initial condition.   
We then use a Rothe method~\cite{Roubicek}: By an implicit discretization scheme, we construct a sequence of semi-discrete approximations, for which we establish uniform bounds in appropriate norms.
Existence of a solution can then be proven by weak compactness arguments.
This approach allows us to consider also spatially varying coefficients and non-vanishing magnetic fields.
Similar to \cite{ShengHan2013,Herty2012}, we also prove uniqueness in a class of regular solutions.
%

\smallskip 
\textbf{A trace theorem for the Fokker-Planck equation.}
The existence of boundary values for functions in anisotropic Sobolev spaces is a subtle issue. For the Boltzmann transport equation, the appropriate trace spaces are known \cite{Agoshkov_1998,Dautray6}. 
An additional technical difficulty arises for the Fokker-Planck approximation \eqref{eq:fp}, which seems to have been overlooked in some previous work. 
As part of our analysis, we thus provide a rigorous proof of a corresponding trace estimate, Lemma~\ref{lem:traces}, which might also be of independent interest.

\smallskip 
\textbf{Systematic discretization and error estimates.}
Various methods can be employed for the numerical solution of the Fokker-Planck equation~\eqref{eq:fp}. Monte-Carlo methods, see e.g. \cite{Bouchard_2015,Gifford2006}, are extremely flexible but pose computational challenges for applications like therapy planning, which require optimization with respect to model parameters~\cite{Frank2010}. Alternative methods based on deterministic discretization paradigms have therefore been considered; see for instance \cite{St-Aubin_2015,St-Aubin_2016,Swan_2021}. 
In this paper, we utilize a spherical-harmonics finite-element scheme, which has been proven successful in the context of neutron transport and radiative heat transfer; see e.g. \cite{Ackroyd_1997,Lewis_1984}. 
Together with the finite-difference approximation in energy, which was used to prove the existence of solutions on the continuous level, we obtain a fully practical discretization scheme with provable stability properties.
By extension of previous work \cite{ES2012,ES2016}, we perform a full discretization error analysis, which is further supported by numerical tests. 

\smallskip 
\textbf{Outline.}
The remainder of this article is organized as follows: 
In Section~\ref{sec:prelim}, we introduce some additional notation, our main assumptions, and some preliminary results. 
Section~\ref{sec:anal} is then concerned with the analysis of the problem. We establish the trace theorem, mentioned above, and prove existence of a weak solution.
In Section~\ref{sec:disc}, we introduce our fully discrete method and present its error analysis. 
For illustration of our theoretical results and the applicability of the method, some numerical tests are presented in Section~\ref{sec:num}.

\section{Preliminaries and Notation} 
\label{sec:prelim}

Throughout the manuscript, we make use of the following general assumptions on the problem data.

\begin{assumption}\label{ass:1}
$\R\subset\RR^3$ is a bounded convex domain, $\S \subset \RR^3$ the unit sphere, and $\E=(\emin,\emax)$ a bounded interval. 
The parameter functions $T$, $S$ and $G_i$, $i=1,2,3$ lie in $W^{1,\infty}(\R\times\E)$. 
Moreover, the functions $T$ and $S$ are uniformly bounded from below, i.e., there exist constants $c_S$, $c_T>0$ such that $c_S \le S(\r,\e)$ and $c_T \le T(\r,\e)$ for a.e. $\r\in\R$ and $\e\in\E$. 
\end{assumption}
Since $\R$ is convex, its boundary is Lipschitz and the outward unit normal vector field $n$ is well-defined.
Bounds on the absolute value of a general function $F$ and its derivatives will be denoted by $C_{F}$ and $C_{F}'$, respectively. 
We use standard notation for function spaces, e.g. $L^p(\R \times \S)$ for the class of measurable functions whose $p$-th power is integrable or $C(\R \times \S \times \E)$ for continuous functions on $\R\times\S\times\E$. Furthermore, we use $L^p(\E;X)$ to denote the Bochner spaces of functions $f : \E \to X$ with values in some Banach space $X$.
For ease of notation, we introduce the abbreviations 
\begin{align*}
\la u,v\ra = \int_{\R \times \S}  u \, v\, \d(r,\s) 
\qquad \text{and} \qquad
\la u,v\ra_\partial = \int_{\partial\R \times \S} u \, v \, \d(r,\s) 
\end{align*}
for the scalar products of $L^2(\R \times \S)$ and $L^2(\partial\R \times S)$. 
The same symbols will be used later on also to denote duality products of certain Sobolev spaces, defined over the respective domains, and their dual spaces. 
By basic arguments, we obtain the following integration-by-parts formulas, which will be used later on. 
\begin{lemma} \label{lem:int-by-parts}
Let Assumption~\ref{ass:1} hold and $u,v \in C^2(\overline \R \times \S)$. Then
\begin{align*}
\la \s \cdot \nabla_\r u, v\ra &= -\la u, \s \cdot \nabla_\r v \ra + \la n \cdot \s \, u , v\ra_\partial \\
\la G \cdot \s \times \nabla_\s u, v \ra &= -\la u, G \cdot \s \times \nabla_\s v\ra \\
\la T \Delta_\s u, v\ra &= -\la T \nabla_\s u, \nabla_\s v\ra.
\end{align*}
For $u,v \in C^1(\overline \E;L^2(\R \times \S))$, we have 
\begin{align*}
\int\nolimits_\E \la \dE u, v \ra  \,\d\e &= -\int\nolimits_\E \la u, \dE v\ra \,  \d\e + \la u,v\ra\big|_{\emin}^{\emax}.  
\end{align*}
\end{lemma}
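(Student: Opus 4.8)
The plan is to prove the four identities of Lemma~\ref{lem:int-by-parts} essentially by reduction to classical integration-by-parts formulas (Gauss/divergence theorem on $\R$ and Stokes-type identities on the sphere $\S$), carefully keeping track of boundary terms. Throughout I would fix $u,v$ with the stated regularity, so that all integrands below are continuous on the relevant compact domains and Fubini applies freely.

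\textbf{The spatial transport term.} First I would treat $\la \s\cdot\nabla_\r u, v\ra$. For each fixed $\s\in\S$ the function $r\mapsto u(r,\s)v(r,\s)$ lies in $C^1(\overline\R)$, and the product rule gives $\s\cdot\nabla_\r(uv) = (\s\cdot\nabla_\r u)\,v + u\,(\s\cdot\nabla_\r v)$. Since $\s$ is a constant vector, $\s\cdot\nabla_\r(uv) = \nabla_\r\cdot(\s\, uv)$, so the divergence theorem on the convex smooth domain $\R$ yields $\int_\R \s\cdot\nabla_\r(uv)\,\d r = \int_{\partial\R} n(r)\cdot\s\, uv\,\d r$. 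Integrating over $\s\in\S$ and rearranging gives the first formula.

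\textbf{The Lorentz and Laplace-Beltrami terms.} For the second identity I would use the analogous product rule for the surface gradient, $\nabla_\s(uv) = (\nabla_\s u)v + u(\nabla_\s v)$, together with the scalar triple product identity $G\cdot(\s\times a) = -\,a\cdot(G\times\s) = (\s\times G)\cdot a$ applied with $a = \nabla_\s(uv)$, reducing the claim to $\int_\S (\s\times G)\cdot\nabla_\s(uv)\,\d\s = 0$. This is the statement that the tangential vector field $w\mapsto (\s\times G)$, restricted to $\S$, is (pointwise in $r,\e$) divergence-free on the closed manifold $\S$ in the appropriate sense, so that $\int_\S \nabla_\s\cdot\big((\s\times G)\,uv\big)\,\d\s$ vanishes by the divergence theorem on the boundaryless manifold $\S$; indeed $\operatorname{div}_\s(\s\times G) = 0$ since $\s\times G$ is the restriction of the smooth divergence-free field $x\mapsto x\times G$ (with $\operatorname{div}_x(x\times G) = 0$) and its normal component $\s\cdot(\s\times G) = 0$. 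For the third identity, the Laplace-Beltrami operator satisfies Green's first identity on the closed manifold $\S$, $\int_\S (\Delta_\s u)\,\phi\,\d\s = -\int_\S \nabla_\s u\cdot\nabla_\s\phi\,\d\s$ with no boundary term; applying this with $\phi = T v$ (for fixed $r,\e$, since $T$ does not depend on $\s$) and then integrating over $\R$ gives the result, pulling the $\s$-independent factor $T$ out as needed.

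\textbf{The energy term.} For the last identity I would simply invoke the fundamental theorem of calculus in the Bochner setting: for $u,v\in C^1(\overline\E;L^2(\R\times\S))$ the scalar function $\e\mapsto \la u(\e),v(\e)\ra$ is $C^1$ on $\overline\E$ with derivative $\la\dE u,v\ra + \la u,\dE v\ra$, so integrating over $\E$ and applying the fundamental theorem of calculus yields $\int_\E \big(\la\dE u,v\ra + \la u,\dE v\ra\big)\,\d\e = \la u,v\ra\big|_{\emin}^{\emax}$, which is the claim after rearranging.

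\textbf{Main obstacle.} None of the steps is deep, but the one requiring the most care is the vanishing of the boundary term in the Lorentz identity: one must be precise that $\s\times G$ is a genuine tangent field on $\S$ and that its surface divergence vanishes, so that the divergence theorem on the closed manifold $\S$ applies with no boundary contribution. Equivalently, one can bypass surface calculus entirely by writing $G\cdot\s\times\nabla_\s u = G\cdot(\s\times\nabla u)$ for a $C^2$ extension of $u$ near $\S$ (noting the left side is independent of the extension), expanding via vector identities, and integrating over $\S$ using that $\int_\S \nabla\cdot\big((x\times G)f\big)|_{\S}\,\d\s$ vanishes because the integrand is a surface divergence of a tangential field; I would present whichever of these is cleanest given the parametric conventions of \cite{Han2013,Pomraning1992}.
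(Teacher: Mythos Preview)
Your proposal is correct and is exactly the kind of ``basic argument'' the paper invokes without spelling out: the paper gives no proof beyond that phrase, and your use of the divergence theorem on $\R$, Green's identity on the closed manifold $\S$ (noting $T$ is independent of $\s$), the tangentiality and vanishing surface divergence of $\s\times G$, and the fundamental theorem of calculus in $\e$ are precisely the standard ingredients. One harmless slip: in your scalar triple product chain the first equality should read $G\cdot(\s\times a)=a\cdot(G\times\s)$ (no minus sign), but since you only need the resulting integral to vanish this does not affect the argument.
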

As a direct consequence, we obtain the following characterization of smooth solutions.
\begin{lemma}
Let $\psi$ be a smooth solution of \eqref{eq:fp}--\eqref{eq:ibc}. Then 
\begin{align} \label{eq:vp}
\int\nolimits_\E \la \psi, S \dE v \ra - \la \psi, \s \cdot \nabla_\r v \ra - \la \psi, G \cdot \s \times \nabla_\s v\ra 
+ \la T \nabla_\s \psi, \nabla_\s v\ra \, \d\e =\int\nolimits_\E \la \q, v \ra \,\d\e  %
\end{align}
for all smooth functions $v\in C^1(\overline\E \times \overline\R \times \S)$ with $v(\emin)=0$ and $v=0$ on $\Gamma_{out} \times \E$. 
\end{lemma}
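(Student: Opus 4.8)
The plan is to derive the variational identity \eqref{eq:vp} directly from the strong form \eqref{eq:fp} by testing against a smooth function $v$ and using the integration-by-parts formulas of Lemma~\ref{lem:int-by-parts} to move all derivatives off $\psi$ and onto $v$, while carefully tracking the boundary contributions on $\Gamma$ and on the energy endpoints $\emin,\emax$, which are then seen to vanish thanks to the boundary conditions \eqref{eq:ibc} on $\psi$ and the stipulated conditions $v(\emin)=0$, $v|_{\Gamma_{out}\times\E}=0$. First I would multiply \eqref{eq:fp} by $v$, integrate over $\R\times\S\times\E$, and treat the four terms on the left-hand side one at a time.

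For the term $-\dE(S\psi)$, apply the Bochner integration-by-parts formula to the pair $S\psi$ and $v$: $\int_\E \la \dE(S\psi),v\ra\,\d\e = -\int_\E\la S\psi,\dE v\ra\,\d\e + \la S\psi,v\ra\big|_{\emin}^{\emax}$. The boundary term at $\emax$ vanishes because $\psi(\emax)=0$ from \eqref{eq:ibc}, and the one at $\emin$ vanishes because $v(\emin)=0$; here one rewrites $\la S\psi,\dE v\ra = \la \psi, S\dE v\ra$ since $S$ is real-valued, matching the first term of \eqref{eq:vp}. For the transport term $\s\cdot\nabla_\r\psi$, the first formula of Lemma~\ref{lem:int-by-parts} gives $\la\s\cdot\nabla_\r\psi,v\ra = -\la\psi,\s\cdot\nabla_\r v\ra + \la n\cdot\s\,\psi,v\ra_\partial$; the boundary term splits over $\Gamma_{in}$ and $\Gamma_{out}$, and vanishes on $\Gamma_{in}$ because $\psi=0$ there and on $\Gamma_{out}$ because $v=0$ there, leaving exactly $-\la\psi,\s\cdot\nabla_\r v\ra$. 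For the Lorentz term, the second formula gives $\la G\cdot\s\times\nabla_\s\psi,v\ra = -\la\psi,G\cdot\s\times\nabla_\s v\ra$ with no boundary contribution since $\S$ is closed. For the scattering term, the third formula gives $\la T\Delta_\s\psi,v\ra = -\la T\nabla_\s\psi,\nabla_\s v\ra$, so $-\la T\Delta_\s\psi,v\ra = \la T\nabla_\s\psi,\nabla_\s v\ra$, again with no boundary term. Integrating each of these over $\E$ and collecting terms yields precisely \eqref{eq:vp}.

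I do not expect a genuine obstacle here, since the statement is an immediate consequence of Lemma~\ref{lem:int-by-parts} together with the boundary conditions; the only points requiring a little care are the regularity bookkeeping (a smooth solution $\psi$ and smooth test function $v$ certainly lie in the $C^2(\overline\R\times\S)$ and $C^1(\overline\E;L^2(\R\times\S))$ classes required by Lemma~\ref{lem:int-by-parts}, so the formulas apply, possibly after a density or localization remark for the dependence on $\e$) and the correct cancellation of the two distinct types of boundary terms. The mild subtlety worth stating explicitly is that the phase-space boundary term $\la n\cdot\s\,\psi,v\ra_\partial$ is killed by a \emph{splitting} argument: on $\Gamma_{in}$ it is the factor $\psi$ that vanishes, whereas on $\Gamma_{out}$ it is $v$, so one uses both the homogeneous inflow condition on $\psi$ from \eqref{eq:ibc} and the outflow condition imposed on $v$. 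Analogously, the energy-endpoint term uses $\psi(\emax)=0$ at one end and $v(\emin)=0$ at the other, reflecting the fact that the energy plays the role of a pseudo-time running backwards from $\emax$ to $\emin$.
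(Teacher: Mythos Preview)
Your proposal is correct and follows exactly the approach sketched in the paper: multiply \eqref{eq:fp} by a smooth test function $v$, integrate over $\R\times\S\times\E$, apply the integration-by-parts formulas of Lemma~\ref{lem:int-by-parts}, and eliminate the boundary terms using the conditions on $\psi$ and $v$. Your write-up is in fact more detailed than the one-line justification given in the paper.
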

The claim follows immediately by multiplying \eqref{eq:fp} with a smooth test function $v$, integrating over $\R \times \S \times \E$,  using the above integration-by-parts formulas, and the boundary conditions for $\psi$ and $v$.
This variational characterization of smooth solutions can be used to introduce the following solution concept.

\begin{definition}\label{def:weak_solution}
A function $\psi \in L^\infty(\E;L^2(\R \times \S))$ with $\nabla_\s \psi \in L^2(\E;L^2(\R \times \S))$ 
satisfying \eqref{eq:vp} 
for all $v\in C^1(\overline\R \times \S \times \overline \E)$ with $v(\emin)=0$ and $v=0$ on $\Gamma_{out} \times \E$, is called a weak solution of \eqref{eq:fp}--\eqref{eq:ibc}.
\end{definition}

Using the conditions of Assumption~\ref{ass:1}, existence of such a weak solution will be established next.

\section{Existence of solutions}
\label{sec:anal} 

The main goal of this section is to show the following generalization of corresponding results in \cite{ShengHan2013,Herty2012}.

\begin{theorem}\label{thm:existence}
Let Assumption~\ref{ass:1} hold. Then for any $\q \in L^2(\E;L^2(\R \times \S))$, there exists a weak solution $\psi$ of the system \eqref{eq:fp}--\eqref{eq:ibc} in the sense of Definition~\ref{def:weak_solution}.
\end{theorem}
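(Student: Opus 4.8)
The plan is to prove Theorem~\ref{thm:existence} by the Rothe method, i.e.\ by semi-discretization in the energy variable $\e$ viewed as a pseudo-time running from $\emax$ down to $\emin$. First I would introduce a partition $\emax = \e_0 > \e_1 > \dots > \e_N = \emin$ with step size $\tE = (\emax-\emin)/N$ and replace the term $-\dE(S\psi)$ by a backward difference quotient. This leads, at each level $\e_k$, to a stationary variational problem: find $\psi^k$ in the Hilbert space $H := \{ u \in L^2(\R\times\S) : \nabla_\s u \in L^2(\R\times\S), \ u|_{\Gamma_{out}} \text{ in the appropriate trace sense}\}$ (or, working with test functions as in \eqref{eq:vp}, a suitable subspace) such that
\begin{align*}
\tfrac{1}{\tE}\la S^k \psi^k - S^{k-1}\psi^{k-1}, v\ra
- \la \psi^k, \s\cdot\nabla_\r v\ra
- \la \psi^k, G^k\cdot\s\times\nabla_\s v\ra
+ \la T^k \nabla_\s\psi^k, \nabla_\s v\ra
= \la q^k, v\ra
\end{align*}
for all admissible $v$, with $\psi^0 = 0$. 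Existence and uniqueness of each $\psi^k$ follows from the Lax--Milgram lemma or, more robustly, from Lions' projection theorem applied to the non-symmetric bilinear form: coercivity on the diagonal comes from the $\tfrac{1}{\tE} c_S \|\cdot\|^2$ mass term plus the $c_T \|\nabla_\s\cdot\|^2$ Laplace--Beltrami term, while the transport and Lorentz terms are handled using the integration-by-parts formulas of Lemma~\ref{lem:int-by-parts} — note that the Lorentz term $\la G\cdot\s\times\nabla_\s\cdot,\cdot\ra$ is skew-symmetric and hence contributes nothing to the coercivity estimate, and the spatial transport term contributes a nonnegative boundary term $\tfrac12\la |n\cdot\s| u,u\ra_\partial$ after symmetrization, using convexity of $\R$ only insofar as it keeps the geometry clean.

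The second step is to derive \emph{uniform} a priori bounds on the Rothe sequence, independent of $N$. Testing the $k$-th equation with $v = \psi^k$ and using the skew-symmetry of the transport and Lorentz contributions together with $ab \le \tfrac12 a^2 + \tfrac12 b^2$ on the mass term (with the weight $S$), I expect to obtain a discrete Gronwall inequality of the form
\begin{align*}
\tfrac12\la S^k\psi^k,\psi^k\ra + \tE \sum_{j\le k}\la T^j\nabla_\s\psi^j,\nabla_\s\psi^j\ra
\;\le\; C\Big(\tE\sum_{j\le k}\|q^j\|^2 + \tE\sum_{j\le k}\|S^j\psi^j\|^2\Big),
\end{align*}
where the last term is absorbed using the discrete Gronwall lemma and the $W^{1,\infty}$ bound on $\partial_\e S$ (so that $S^k - S^{k-1} = O(\tE)$). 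This yields a bound on $\max_k \|\psi^k\|_{L^2(\R\times\S)}$ and on $\tE\sum_k \|\nabla_\s\psi^k\|^2$ in terms of $\|q\|_{L^2(\E;L^2(\R\times\S))}$, uniformly in $N$.

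The third step is the passage to the limit. I would define the piecewise-constant (and/or piecewise-linear) interpolants $\bar\psi_N(\e) := \psi^k$ for $\e \in (\e_k,\e_{k-1}]$, which by the a priori bounds are bounded in $L^\infty(\E;L^2(\R\times\S))$ with $\nabla_\s\bar\psi_N$ bounded in $L^2(\E;L^2(\R\times\S))$. By the Banach--Alaoglu theorem we extract a weakly-$*$ convergent subsequence $\bar\psi_N \rightharpoonup \psi$ in $L^\infty(\E;L^2)$ with $\nabla_\s\bar\psi_N \rightharpoonup \nabla_\s\psi$ in $L^2(\E;L^2)$. One then checks that the Rothe scheme, summed against a fixed smooth test function $v$ with $v(\emin)=0$ and $v=0$ on $\Gamma_{out}\times\E$, converges to the weak formulation \eqref{eq:vp}: the discrete time-difference term, after a discrete summation by parts, converges to $-\int_\E \la \psi, S\dE v\ra\,\d\e$ (the boundary term at $\emax$ drops because $v(\emax)$ pairs with $\psi^0 = 0$ after reindexing, and $v(\emin)=0$ kills the other end), the spatial and Lorentz terms pass to the limit by weak convergence of $\bar\psi_N$ against the fixed smooth coefficients times $v$, and the Laplace--Beltrami term passes by weak convergence of $\nabla_\s\bar\psi_N$. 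Consistency of the interpolated coefficients $\bar S_N, \bar T_N, \bar G_N \to S,T,G$ in $L^\infty$ (again from the $W^{1,\infty}$ regularity) closes the argument, and the limit $\psi$ is the desired weak solution.

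The main obstacle I anticipate is not any single estimate but the careful handling of the boundary and trace terms: ensuring the semi-discrete problems are well-posed requires knowing that the boundary term $\la n\cdot\s\,u,v\ra_\partial$ makes sense and has the right sign on the function space used, which is precisely where the trace estimate of Lemma~\ref{lem:traces} enters — the subtlety being that the Fokker--Planck operator, unlike pure transport, has the extra $-T\Delta_\s$ term, so the natural energy space couples an $L^2$-in-$\r$, $\s\cdot\nabla_\r$-transport structure with $H^1$-in-$\s$ regularity, and one must verify that traces on $\Gamma_{in/out}$ are still well-defined in a suitable weighted sense. A secondary technical point is making the discrete summation-by-parts in $\e$ match the continuous boundary term in \eqref{eq:vp} exactly, keeping track of the weight $S$ inside the difference quotient; this is routine but must be done with care since $S$ depends on $\e$.
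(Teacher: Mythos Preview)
Your overall strategy—Rothe discretization in $\e$, uniform a priori bounds via discrete Gronwall, and passage to the limit by Banach--Alaoglu—matches the paper's proof exactly, and your treatment of the uniform bounds (Step~2) and the limit passage (Step~3) is essentially what the paper does in Lemma~\ref{lem:stability} and Section~3.5.

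The genuine gap is in your Step~1, the well-posedness of each semi-discrete problem. You claim this follows from Lax--Milgram or Lions' projection, with coercivity coming from the $\tfrac{1}{\tE}c_S$ mass term and the $c_T\|\nabla_\s\cdot\|^2$ term. This does not work: the spatial transport operator $\s\cdot\nabla_\r$ is unbounded on the space $\VV=\{u:\nabla_\s u\in L^2\}$, so for the bilinear form to be bounded the test (or trial) space must carry the graph norm $\|\s\cdot\nabla_\r v\|$; but then $b(u,u)$ does \emph{not} control that part of the norm (testing with $v=u$ only yields a boundary term, not $\|\s\cdot\nabla_\r u\|^2$), so coercivity fails and Lax--Milgram does not apply. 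Lions' theorem in the asymmetric form would give a solution only in the larger space, without the $\nabla_\s\psi\in L^2$ regularity you need for the uniform bound. The paper resolves this by a nontrivial device you have not anticipated: it splits functions into even and odd parts in $\s$, works on the \emph{mixed} space $\UU=\WW^+\oplus\VV^-$ (even part in the graph space $\WW$, odd part only in $\VV$), rewrites the transport term as $\la\A u^+,v^-\ra-\la u^-,\A v^+\ra$, and proves an inf--sup condition by testing with $v=u+\gamma\,\C^{-1}\A u^+$ (Babu\v{s}ka--Aziz, not Lax--Milgram). This even/odd mixed-space formulation is the key structural idea, and it is also what later drives the compatible $P_N$--finite-element discretization; your proposal misses it.
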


The remainder of the section is devoted to the proof of this theorem. For orientation, let us briefly outline the main steps:
%
By backward differencing with respect to the energy variable $\e$, we construct a sequence of approximate solutions, and then prove uniform bounds on these approximations in appropriate spaces. Existence of a weak-solution is then obtained by weak-compactness arguments and linearity of the problem. 

\subsection{Energy discretization}

Let $\emax=\e^M > \e^{M-1} > \ldots > \e^0 = \emin$ denote a partition of the energy interval $\E=(\emin,\emax)$. For ease of notation, we assume $\e^{m-1} = \e^{m} - \tE$ to be equidistant.
For any sequence $(u^m)_{m \ge 0}$, we write
\begin{align*}
\bdE u^m = \frac{1}{\tE}(u^{m+1}-u^m)
\end{align*}
for the backward difference quotient. 
We use $u^m=u(\e^m)$ and $\bar u^m=\frac{1}{\tE} \int_{\e^{m}}^{\e^{m+1}} u(\e) \, \d\e$ to denote the evaluation and local averages of a function $u$ depending on the energy variable $\e$. 
Note that we traverse through \eqref{eq:fp} from high to low energy, following the physical origin of the slowing-down approximation. In view of \eqref{eq:ibc}, we thus choose $\psi^M=0$ for initialization. 
The approximations $\psi^m \approx \psi(e^m)$ for the lower energy levels $\e^m$, $m \le M-1$, are then obtained by solving recursively
\begin{alignat}{2} 
\label{eq:me}
-\bdE (S \psi)^m + \s \cdot \nabla_\r \psi^m + G^m \cdot \s \times \nabla_\s \psi^m - 
T^m \Delta_\s \psi^m &= \bar \q^m \qquad &&\text{in } \R \times \S, \\
\label{eq:me2}
\psi^m &= 0 \qquad &&\text{on } \Gamma_{in}.
\end{alignat}
Let us note that a local average of the source term is used on the right-hand side of \eqref{eq:me}.
Apart from this modification and the reverse transition through the energy levels, from high to low, this method amounts to a standard implicit Euler \emph{time-stepping} scheme, with $\e$  interpreted as pseudo-time.

\subsection{A trace theorem}

Extending the considerations of \cite{Agoshkov_1998,ES2012}, the natural Hilbert spaces for the analysis of \eqref{eq:me}--\eqref{eq:me2} turn out to be 
\begin{align}
\VV &= \{v \in L^2(\R \times \S) : \nabla_\s v \in L^2(\R \times \S)\},  \label{eq:def_V}\\
\WW &= \{v \in \VV: \s \cdot \nabla_\r v \in \VV^*,\, |s\cdot n|^{1/2}v\in L^2(\Gamma)\}, \label{eq:def_W}
\end{align}
with $\VV^*$ denoting the dual space of $\VV$, where the norm on $\VV$ is given by $\|\cdot\|_{\VV}^2=\|\cdot\|_{L^2(\R \times \S)}^2+\|\nabla_\s \cdot\|_{L^2(\R \times \S)}^2$.
In order to verify that the definition of $\WW$ makes sense, one has to ensure that functions $v \in \VV$ with directional derivatives $\s \cdot \nabla_r v \in \VV^*$ have well-defined traces. 
This can be guaranteed by the following technical result. 
\begin{lemma}[Trace estimate] \label{lem:traces}
Let $\R,\S$ satisfy the conditions of Assumption~\ref{ass:1} and $\Gamma_{in}$ be defined as in \eqref{eq:Gamma}.
Then there exists a constant $C>0$, depending only on $\R$, such that for all $v\in\VV$ with $\s\cdot\nabla_\r v\in\VV^*$, one has
\begin{align*}
    \int_{\Gamma_{in}} |v|^2  |\s\cdot n|\tau^2 \d(\r,\s) \leq C \big(\| \s\cdot\nabla_\r v\|_{\VV^*}^2 + \|v\|_{L^2(\R\times\S)}^2 \big)^{1/2}\|v\|_{\VV}.
\end{align*}
Here $\tau=\tau(\r,\s)$ is the length of the intersection of $\R$ with the line $t\mapsto \r+t\s$. 
\end{lemma}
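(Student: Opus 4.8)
The plan is to establish the estimate first for smooth functions and then pass to the limit by density, so the first task is to identify a suitable dense class: functions $v \in C^2(\overline{\R}\times\S)$ are dense in $\VV$, and for such $v$ both sides of the claimed inequality make sense classically. For a smooth $v$, I would pick a smooth vector field $\beta = \beta(\r)$ on $\overline{\R}$ with $\beta \cdot n \geq c > 0$ on $\partial\R$ — such a field exists because $\R$ is convex with smooth boundary (e.g.\ $\beta(\r) = \r - \r_0$ for an interior point $\r_0$ works up to rescaling, using convexity to ensure positivity of $\beta\cdot n$). The idea is to test the identity $\s\cdot\nabla_\r(v^2) = 2 v\,(\s\cdot\nabla_\r v)$ against the weight $(\beta\cdot\s)\,\tau^2$ and integrate over $\R\times\S$, using the divergence theorem in $\r$ for fixed $\s$. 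This produces a boundary term of the form $\int_{\partial\R\times\S} v^2 (n\cdot\s)(\beta\cdot\s)\tau^2\,\d(\r,\s)$, and since $\tau$ vanishes on $\Gamma_{out}$ (the exit length along the backward ray is zero there) while $n\cdot\s < 0$ on $\Gamma_{in}$, this boundary term equals, up to sign, $\int_{\Gamma_{in}} v^2 |n\cdot\s|(\beta\cdot\s)\tau^2\,\d(\r,\s)$; bounding $\beta\cdot\s$ from below by a positive constant on the relevant part of the inflow boundary (after restricting to directions with $\beta\cdot\s$ bounded away from $0$, and handling the complementary grazing directions separately) will recover the left-hand side of the Lemma.

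The volume contributions from integrating by parts are of two types: one in which the derivative falls on $v$ itself, producing $\int (\beta\cdot\s)\tau^2\, v\,(\s\cdot\nabla_\r v)\,\d(\r,\s)$, and one in which the derivative falls on the weight $(\beta\cdot\s)\tau^2$. The first of these I would estimate by duality: $(\beta\cdot\s)\tau^2 v$ should be shown to lie in $\VV$ with norm controlled by $\|v\|_{\VV}$ (here one needs that $\tau$ and $\nabla_\s\tau$, as well as $\beta\cdot\s$, are bounded — boundedness of $\tau$ is immediate from convexity and boundedness of $\R$, and the angular regularity of $\tau$ on the convex domain is where geometry enters), so that this term is bounded by $\|\s\cdot\nabla_\r v\|_{\VV^*}\,\|v\|_{\VV}$. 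The second term, where the derivative hits the weight, yields $\int \s\cdot\nabla_\r\big((\beta\cdot\s)\tau^2\big)\, v^2\,\d(\r,\s)$; using $\s\cdot\nabla_\r\tau = -1$ (the exit length decreases at unit rate as one moves along $\s$) together with $\nabla_\r(\beta\cdot\s) = (D\beta)^\top\s$ bounded, this is controlled by $\int |v|^2(1 + \tau)\,\d(\r,\s) \lesssim \|v\|_{L^2(\R\times\S)}^2$. Combining, one arrives at an inequality of the form $\int_{\Gamma_{in}} |v|^2|\s\cdot n|\tau^2 \lesssim \|v\|_{L^2}^2 + \|\s\cdot\nabla_\r v\|_{\VV^*}\|v\|_{\VV}$, and then absorbing $\|v\|_{L^2}^2 = \|v\|_{L^2}\cdot\|v\|_{L^2} \leq \|v\|_{L^2}\|v\|_{\VV}$ and using $\|v\|_{L^2}\leq (\|\s\cdot\nabla_\r v\|_{\VV^*}^2 + \|v\|_{L^2}^2)^{1/2}$ gives exactly the stated bound.

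Finally, the passage to the limit: given general $v\in\VV$ with $\s\cdot\nabla_\r v\in\VV^*$, approximate it by smooth $v_k\to v$ in $\VV$ with $\s\cdot\nabla_\r v_k\to \s\cdot\nabla_\r v$ in $\VV^*$ (this requires a mollification argument compatible with both the angular derivative and the spatial transport derivative; one convolves in $\r$ after a suitable extension, which is where convexity and smoothness of $\partial\R$ are again convenient). The right-hand side is continuous under this convergence, and Fatou's lemma applied to the boundary integral on the left gives the inequality for $v$; in particular $|s\cdot n|^{1/2}\tau\, v \in L^2(\Gamma_{in})$, which secures well-definedness of the traces appearing in the definition of $\WW$ — note the factor $\tau$ is harmless since $\tau$ is bounded below by a positive constant away from the grazing set $\{n\cdot\s = 0\}$, so square-integrability of $|s\cdot n|^{1/2}\tau v$ is equivalent to that of $|s\cdot n|^{1/2}v$ on the non-grazing part, and on the grazing part $|s\cdot n|^{1/2}v$ is controlled directly.

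I expect the main obstacle to be the angular regularity of the weight $\tau(\r,\s)$ and the choice of vector field $\beta$: making rigorous that $\tau$ and its spherical gradient are bounded (uniformly, with constants depending only on $\R$), and correctly handling the grazing directions $n\cdot\s\approx 0$ where $\tau$ may degenerate and where the multiplier estimate $(\beta\cdot\s)\tau^2 v\in\VV$ is most delicate, is the technical heart of the argument — this is presumably the subtlety the authors say "seems to have been overlooked in some previous work." The density/mollification step is the second most delicate point, since one needs simultaneous approximation in the graph norm of the transport operator.
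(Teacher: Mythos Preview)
Your overall strategy---prove the estimate for smooth $v$ by a multiplier/integration-by-parts argument with a weight built from the travel length, estimate the resulting volume terms via $\VV^*$--$\VV$ duality (using that $\tau_+$ and $\nabla_\s\tau_+$ are bounded, which is where smoothness of $\partial\R$ enters), and close by density---is precisely the paper's approach, and your identification of the delicate points (angular regularity of the weight, simultaneous approximation in the graph norm) is accurate.

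There is, however, a concrete error in your multiplier: the factor $(\beta\cdot\s)$ is both unnecessary and harmful. On $\Gamma_{in}$ the quantity $\beta\cdot\s$ has no definite sign, and this is \emph{not} merely a grazing-direction issue. Take $\R$ the unit ball and $\beta(\r)=\r$: at a boundary point $\r$ one has $\beta=n$, so for the normally incident inflow direction $\s=-n$ one gets $\beta\cdot\s=-1$, while for nearly tangential inflow directions $\beta\cdot\s$ is small; for a generic convex domain $\beta\cdot\s$ changes sign across $\Gamma_{in}$. Thus your boundary term $-\int_{\Gamma_{in}} v^2|n\cdot\s|(\beta\cdot\s)\tau^2$ does not dominate $\int_{\Gamma_{in}} v^2|n\cdot\s|\tau^2$, and the splitting into ``$\beta\cdot\s$ bounded away from zero'' and ``complementary directions'' does not rescue this. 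A second slip: the full chord length $\tau$ is constant along rays and does \emph{not} vanish on $\Gamma_{out}$; the properties you use (``$\tau$ vanishes on $\Gamma_{out}$'', ``$\s\cdot\nabla_\r\tau=-1$'') hold for the \emph{forward} exit distance $\tau_+$, and $\tau_+=\tau$ on $\Gamma_{in}$, so the left-hand side of the lemma is unchanged.

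The fix is simply to drop $\beta$ and use the bare weight $\tau_+^2$. Your divergence-theorem computation then gives directly
\[
\int_{\Gamma_{in}} |v|^2\,|\s\cdot n|\,\tau^2\,\d(\r,\s)
= -2\int_{\R\times\S} (\s\cdot\nabla_\r v)\,v\,\tau_+^2\,\d(\r,\s) + 2\int_{\R\times\S} v^2\,\tau_+\,\d(\r,\s),
\]
which is exactly the identity the paper obtains (the paper phrases it via the cutoff $z=\tau_+/\tau$ and the fundamental theorem of calculus along rays, an equivalent bookkeeping). From here your duality bound $|\langle \s\cdot\nabla_\r v,\, v\tau_+^2\rangle|\le \|\s\cdot\nabla_\r v\|_{\VV^*}\|v\tau_+^2\|_{\VV}\le C\|\s\cdot\nabla_\r v\|_{\VV^*}\|v\|_{\VV}$ and the obvious $L^2$ estimate finish the argument exactly as you outlined.
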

\begin{proof}
We adapt the proof of \cite[Thm.~2.2]{Manteuffel1999}. 
Let $\Gamma_{in}(\s)=\{\r\in \partial\R: n(\r)\cdot \s<0\}$ and $\Gamma_{out}(\s)=\partial\R \setminus \overline{\Gamma_{in}(s)}$ 
be the inflow and the outflow part of $\partial \R$ for a fixed direction $\s\in\S$. 
We split $\tau=\tau_-+\tau_+$, where $\tau_-$ is the distance along the line segment from $\r$ to the inflow boundary $\Gamma_{in}(s)$, while $\tau_+$ is the corresponding distance to the outflow boundary.
We further define $z(\r,\s)=1-\tau_-(\r,\s)/\tau(\r,\s)$, and observe that $z(\r,\s)=1$ for $r\in \Gamma_{in}(s)$ and $z(\r,\s)=0$ for $r\in\Gamma_{out}(s)$.
For $r\in\Gamma_{in}(s)$, we then see that $z(\r+t\s,\s)=1-t/\tau(\r,\s)$ and $s\cdot\nabla_r z(\r+t\s,\s)=-1/\tau(\r,\s)$.
By the fundamental theorem of calculus, we then compute for $\r\in\Gamma_{in}(s)$
\begin{align*}
    v(\r,\s)^2 &= (v(\r,\s)z(\r,\s))^2= - \int_0^{\tau(\r,\s)} s\cdot\nabla_r (v(\r+t\s,\s)z(\r+t\s,\s))^2 \,\d t\\
    &= - 2 \int_0^{\tau(\r,\s)} s\cdot\nabla_\r v(\r+t\s,\s) v(r+ts,s) \left(\frac{\tau(\r,\s)-t}{\tau(\r,\s)}\right)^2
    -  v(\r+t\s,\s)^2  \frac{\tau(\r,\s)-t}{\tau(\r,\s)^2}\,\d t,
\end{align*}
for any $v \in C^1(\overline \R \times \S)$. 
Multiplying the latter identity by $|s\cdot n|\tau(\r,\s)^2$ and integrating over $\Gamma_{in}(s)$ yields
\begin{align*}
    \int_{\Gamma_{in}(s)}|v|^2|s\cdot n|\tau(\r,\s)^2 \,\d\r = - 2 \int_{\Gamma_{in}(s)}\int_0^{\tau(\r,\s)} &\Big( s\cdot\nabla_r v(\r+t\s,\s) v(r+ts,s) (\tau-t)^2 \\
    - & v(r+t\s,\s)^2  (\tau-t)\Big)|s\cdot n|\,\d t\,\d\r.
\end{align*}
By integration over $\S$ and using the  identity
$\int_{\Gamma_{in}(s)}\int_0^{\tau(\r,\s)} f(\r+t\s) |\s\cdot n| \d t\d\r = \int_{\R} f(\r)\d\r$, which holds for any $f\in L^1(\R)$, see for instance in \cite[Lem.~1]{Stefanov:1999}, we then immediately obtain the identity
\begin{align*}
    \int_{\Gamma_{in}}|v|^2|s\cdot n|\tau(\r,\s)^2 \,\d(\r,\s) 
    = - 2 \int_{\R\times\S}  s\cdot\nabla_r v v \tau_+^2
    -  v^2  \tau_+\,\d(\r,\s).
\end{align*}
An application of the Cauchy-Schwarz inequality now shows that
\begin{align*}
    \int_{\Gamma_{in}}|v|^2|s\cdot n|\tau(\r,\s)^2 \,\d(\r,\s) 
    \leq  2  \|s\cdot\nabla_r v\|_{\VV^*} \|v \tau_+^2\|_{\VV} +2 \|v \tau_+^{1/2}\|_{L^2(\R\times\S)}^2.
\end{align*}
To estimate the last term, we use that $\tau_+ \leq {\rm diam}(\R)$ and that $\nabla_s\tau_+$ is bounded because $\partial\R$ is Lipschitz.
Therefore, $\|v \tau_+^2\|_{\VV}\leq C \|v\|_{\VV}$ with a constant depending on $\R$. This shows the validity of the bounds for smooth functions, and the claim of the lemma finally follows by a density argument.
\end{proof}

\subsection{Well-posedness of the semi-discrete scheme}
\label{sec:inf_sup}

Due to Lemma~\ref{lem:traces}, the Hilbert space $\WW$ with norm 
$\|w\|_{\WW}^2=\|w\|_\VV^2+\|\s \cdot \nabla_\r w\|_{\VV^*}^2 + \||\s \cdot n|^{1/2} w \|_{L^2(\Gamma)}^2$ 
and corresponding inner product is well-defined.
%
As a next step, we introduce some abbreviations for the differential operators appearing in \eqref{eq:fp}, namely  
\begin{align*}
\A u = \s \cdot \nabla_r u 
\qquad \text{and} \qquad 
\G u = G(\e^m) \cdot \s \times \nabla_\s u \qquad \forall u \in \WW. 
\end{align*}
For the surface Laplacian, we apply integration by parts and use a weak characterization, i.e., 
\begin{alignat*}{2}
\la \T u, v \ra = \la  T(\e^m) \nabla_\s u, \nabla_\s v \ra \qquad \forall u,v \in \WW.
\end{alignat*}
Note that $\G$ and $\T$ implicitly depend on the time step $m$, and we write $\G^m$  and $\T^m$ to indicate this dependence, if required. 
Similarly, we denote by $\S^m$ the multiplication operator related to the stopping power $S(\e^m)$.
Following \cite{ES2012}, we further decompose functions of the angular variable via
\begin{align*}
\psi = \psi^+ + \psi^- \qquad \text{with} \qquad 
\psi^{\pm}(\s) = \frac{1}{2}(\psi(\s) \pm \psi(-\s))
\end{align*}
into \emph{even} and \emph{odd} parts.
This decomposition is $L^2(\S)$-orthogonal, and it carries over to functions in $\VV$ and $\WW$. 
We hence denote by $\VV^+$ and $\WW^+$ functions in $\VV$ respectively $\WW$ that are even in the $\s$-variable. For the corresponding subspaces of odd functions we write $\VV^-$ and $\WW^-$, respectively.
Hence, we can identify $(w^+,w^-)\in\WW^+ \times \VV^-$ with $w=w^+ + w^-$, and we write $\WW^+ \oplus \VV^-$ for the topological direct sum of $\WW^+$ and $\VV^-$ with inherited norm $\|w\|_{\WW^+\oplus\VV^-}^2=\|w^+\|_{\WW}^2+\|w^-\|_{\VV}^2$.
We then define the mixed regularity space $\UU$ as the set $\WW^+\oplus\VV^-$ endowed with the norm
\begin{align} \label{eq:normU}
\|u\|_{\UU}^2 =\|u\|_{\C}^2+\|u^+\|_\partial^2 + \|\A u^+\|_{\C^{-1}}^2,
\end{align}
where $\|u\|^2_{\C}=\la \C u,u\ra$ and $\|u\|_\partial^2=\la |s\cdot n| u,u\ra_\partial$ with generalized collision operator $\C=\frac{1}{\tE} \S^m +\T$.
By Assumption~\ref{ass:1}, this norm is equivalent to the natural norm on $\WW^+\oplus \VV^-$, and thus $\UU$ is a Hilbert space.
Using elementary arguments, see \cite{ES2012}, one can then verify the following observation.
\begin{lemma}\label{lem:var}
Let $\psi^m \in \WW$ with $\Delta_\s\psi^m,\s\cdot\nabla_\r\psi^m\in L^2(\R\times\S)$ be a solution of \eqref{eq:me}--\eqref{eq:me2} for given data $\psi^{m+1} \in \VV$ and $\bar \q^m \in L^2(\R \times \S)$. 
Then
\begin{align}\label{eq:var_eq}
 -\la \bdE(S\psi)^m,v\ra + a(\psi^m,v) = \la \bar \q^m,v\ra \qquad \forall v \in \UU
\end{align}
with bilinear form $a:\UU\times\UU\to\RR$ defined by
\begin{align}\label{eq:def_bifo}
    a(u,v)=\la \G u,v\ra + \la |\s\cdot n|u^+,v^+\ra_\partial + \la \A u^+,v^-\ra -\la u^-,\A v^+\ra +\la \T u,v\ra,\qquad \forall u,v\in\UU.
\end{align}
\end{lemma}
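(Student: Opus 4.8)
The plan is to derive the variational identity \eqref{eq:var_eq} directly from the strong form \eqref{eq:me}--\eqref{eq:me2} by testing against an arbitrary $v \in \UU = \WW^+ \oplus \VV^-$ and carefully tracking boundary terms. First I would multiply \eqref{eq:me} by $v$, integrate over $\R \times \S$, and treat the four differential operators separately. The term $-\la \bdE(S\psi)^m, v\ra$ requires no manipulation and appears verbatim. For the Laplace-Beltrami term, I would invoke the third identity in Lemma~\ref{lem:int-by-parts} (extended by density to $\WW$, using the definition of $\VV$) to rewrite $-\la T^m \Delta_\s \psi^m, v\ra = \la T^m \nabla_\s \psi^m, \nabla_\s v\ra = \la \T \psi^m, v\ra$. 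For the magnetic term, the second identity in Lemma~\ref{lem:int-by-parts} shows $\la G^m \cdot \s \times \nabla_\s \psi^m, v\ra = \la \G \psi^m, v\ra$ with no boundary contribution, since the operator $\G$ is formally skew-adjoint on the sphere.

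The crux is the streaming term $\la \A \psi^m, v\ra = \la \s\cdot\nabla_\r \psi^m, v\ra$, where the even/odd splitting and the mixed-regularity space $\UU$ enter essentially. Writing $\psi^m = (\psi^m)^+ + (\psi^m)^-$ and $v = v^+ + v^-$, and using that $\s \cdot \nabla_\r$ maps even functions to odd ones and vice versa, only the cross terms $\la \A (\psi^m)^+, v^-\ra$ and $\la \A (\psi^m)^-, v^+\ra$ survive in the volume integral. On the latter I would integrate by parts via the first identity in Lemma~\ref{lem:int-by-parts}, producing $\la \A (\psi^m)^-, v^+\ra = -\la (\psi^m)^-, \A v^+\ra + \la n\cdot\s\,(\psi^m)^-, v^+\ra_\partial$; note this is legitimate precisely because $v^+ \in \WW^+$ has the requisite trace regularity guaranteed by Lemma~\ref{lem:traces} and $\A$ acts on the odd part. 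The boundary integral $\la n\cdot\s\,(\psi^m)^-, v^+\ra_\partial$ must now be combined with the boundary data. Using the symmetry $n\cdot(-\s) = -n\cdot\s$ together with the parity of $(\psi^m)^-$ and $v^+$, one checks $\la n\cdot\s\,(\psi^m)^-, v^+\ra_\partial = \la n\cdot\s\, \psi^m, v^+\ra_\partial$ after symmetrizing in $\s$; invoking $\psi^m = 0$ on $\Gamma_{in}$ (so $\psi^m = \psi^m \cdot \mathbf{1}_{\Gamma_{out}}$ on $\Gamma$, where $n\cdot\s > 0$) and again the parity relation, this boundary term reduces to $\la |\s\cdot n|(\psi^m)^+, v^+\ra$, which is exactly the penalty-type term appearing in \eqref{eq:def_bifo}.

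I would then assemble the pieces: the surviving volume streaming terms give $\la \A (\psi^m)^+, v^-\ra - \la (\psi^m)^-, \A v^+\ra$, the boundary reduction gives $\la |\s\cdot n|(\psi^m)^+, v^+\ra$, and the magnetic and Laplace terms give $\la \G \psi^m, v\ra + \la \T \psi^m, v\ra$; summing these is precisely $a(\psi^m, v)$ as defined in \eqref{eq:def_bifo}, and the right-hand side is $\la \bar q^m, v\ra$, yielding \eqref{eq:var_eq}. The main obstacle is the rigorous justification of the integration by parts and the boundary-term manipulations for $\psi^m \in \WW$ and $v \in \UU$ rather than for smooth functions: one must argue by density of $C^2(\overline\R\times\S)$ in $\WW$ (in the graph norm that includes the weighted boundary $L^2(\Gamma)$-norm) and use the continuity of the trace map furnished by Lemma~\ref{lem:traces} to pass to the limit in each bilinear pairing. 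Once the density argument is in place, all identities above extend by continuity, and since these are "elementary arguments" of the type already carried out in \cite{ES2012}, I would not belabor the estimates further.
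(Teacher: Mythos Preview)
Your proposal is correct and follows precisely the standard derivation the paper defers to \cite{ES2012}: test the strong form, use the parity splitting so that only the cross terms of the streaming operator survive, integrate by parts on $\la \A(\psi^m)^-,v^+\ra$, and reduce the resulting boundary term via $\psi^m|_{\Gamma_{in}}=0$ and the reflection $\s\mapsto-\s$ to the penalty term $\la|\s\cdot n|(\psi^m)^+,v^+\ra_\partial$. The paper itself gives no details beyond the reference, so your write-up is already more explicit than what appears there.
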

\begin{proof}
We proceed similarly to~\cite{ES2012}.
    Multiplication of \eqref{eq:me} with $v\in\UU$ and integration over $\R\times\S$ yields
    \begin{align}\label{eq:me_tested}
        -\la \bdE (S \psi)^m,v\ra + \la\s \cdot \nabla_\r \psi^m,v\ra + \la G^m \cdot \s \times \nabla_\s \psi^m ,v\ra - \la T^m \Delta_\s \psi^m,v\ra = \la \bar \q^m ,v\ra.
    \end{align}
    We next apply Lemma~\ref{lem:int-by-parts} to see that $- \la T^m \Delta_\s \psi^m,v\ra=\la T^m \nabla_\s \psi^m,\nabla_\s v\ra$. It thus remains to investigate the term $\la\s \cdot \nabla_\r \psi^m,v\ra$.
    By the orthogonality of even and odd functions and Lemma~\ref{lem:int-by-parts}, we observe that
    \begin{align*}
         \la\s \cdot \nabla_\r \psi^m,v\ra 
         =\la\s \cdot \nabla_\r \psi^{m,+},v^-\ra - \la\psi^{m,-}, \s \cdot \nabla_\r v^+\ra +\la \s\cdot n \psi^{m,-},v^+\ra_\partial.
    \end{align*}
    Noting that $\s\cdot n \psi^{m,-} v^+$ is an even function of $\s$, we can now further deduce that
    \begin{align*}
        \la \s\cdot n \psi^{m,-},v^+\ra_\partial =2 \la \s\cdot n \psi^{m,-},v^+\ra_{\Gamma_{in}} = 2 \la |\s\cdot n| \psi^{m,+},v^+\ra_{\Gamma_{in}} = \la |\s\cdot n| \psi^{m,+},v^+\ra_\partial,
    \end{align*}
    where we used the boundary condition $\psi^{m,-}=-\psi^{m,+}$ and $\s\cdot n=-|\s\cdot n|$ on $\Gamma_{in}$ in the second step, and that $|\s\cdot n| \psi^{m,+}v^+$ is an even function in the third step. Thus, $\la\s \cdot \nabla_\r \psi^m,v\ra=\la\s \cdot \nabla_\r \psi^{m,+},v^-\ra - \la\psi^{m,-}, \s \cdot \nabla_\r v^+\ra +\la |\s\cdot n| \psi^{m,+},v^+\ra_\partial$. Using this identity in \eqref{eq:me_tested} completes the proof.
\end{proof}
Let us note that the variational identity \eqref{eq:var_eq} makes sense for functions $\psi^m \in \UU$, and we accept such functions as solutions for \eqref{eq:me}--\eqref{eq:me2}. Under Assumption~\ref{ass:1}, the existence of such solutions can be established.
\begin{lemma}\label{lem:well_posed_euler}
For any $\bar \q^m,\psi^{m+1} \in L^2(\R \times \S)$, the system \eqref{eq:me}--\eqref{eq:me2} has a unique solution $\psi^m \in \UU$.
\end{lemma}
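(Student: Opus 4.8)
The plan is to establish existence and uniqueness of a solution $\psi^m \in \UU$ to the variational identity \eqref{eq:var_eq} via the Lax–Milgram theorem applied on the Hilbert space $\UU = \WW^+ \oplus \VV^-$. Rewriting \eqref{eq:var_eq}, since $\bdE(S\psi)^m = \tfrac{1}{\tE}(S^{m+1}\psi^{m+1} - S^m\psi^m)$ and $\psi^{m+1}$, $\bar q^m$ are given, the equation becomes $\tfrac{1}{\tE}\la S^m \psi^m, v\ra + a(\psi^m,v) = \la \bar q^m, v\ra + \tfrac{1}{\tE}\la S^{m+1}\psi^{m+1}, v\ra$ for all $v \in \UU$. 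So the relevant bilinear form is $b(u,v) := \tfrac{1}{\tE}\la S^m u, v\ra + a(u,v)$, and the right-hand side is a bounded linear functional on $\UU$ (here one uses the trace estimate of Lemma~\ref{lem:traces}, which ensures $\UU \hookrightarrow L^2(\Gamma_{in}; |\s\cdot n|\,\d(\r,\s))$, so that the term $\la|\s\cdot n|u^+,v^+\ra_\partial$ in $a$ makes sense and is bounded).

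First I would verify boundedness of $b$ on $\UU \times \UU$. Each term of $a$ in \eqref{eq:def_bifo} must be controlled: $\la\G u,v\ra$ by $C_G\|\nabla_\s u\|\,\|v\|$ using that $G \in W^{1,\infty}$ and $|\s\times\nabla_\s u| \le |\nabla_\s u|$; $\la\T u,v\ra$ by $C_T\|\nabla_\s u\|\,\|\nabla_\s v\|$; the boundary term by the trace estimate of Lemma~\ref{lem:traces}; and the two transport terms $\la\A u^+,v^-\ra$, $\la u^-,\A v^+\ra$ by the definition of the $\WW$-norm, since $\A u^+ = \s\cdot\nabla_\r u^+ \in \VV^*$ is part of what it means for $u^+ \in \WW^+$. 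The term $\tfrac{1}{\tE}\la S^m u,v\ra$ is bounded by $\tfrac{C_S}{\tE}\|u\|\,\|v\|$. Collecting these, $|b(u,v)| \le C\,\|u\|_{\UU}\|v\|_{\UU}$.

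The main obstacle — and the step I would spend the most care on — is coercivity of $b$ on $\UU$. Testing with $v = u$, the antisymmetric transport terms cancel: $\la\A u^+,u^-\ra - \la u^-,\A u^+\ra = 0$. The gyroscopic term $\la\G u,u\ra$ should vanish (or be nonnegative) because $\s\times\nabla_\s u$ is tangential and $G\cdot(\s\times\nabla_\s u)\,u$ integrates to zero against the even/odd structure — this is exactly the ``Lorentz force does no work'' property and is where the antisymmetry of $\G$ from Lemma~\ref{lem:int-by-parts} is used. That leaves $b(u,u) \ge \tfrac{c_S}{\tE}\|u\|_{L^2}^2 + \la|\s\cdot n|u^+,u^+\ra_\partial + c_T\|\nabla_\s u\|_{L^2}^2$. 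This controls $\|u\|_{L^2}^2$, $\|\nabla_\s u^+\|^2 + \|\nabla_\s u^-\|^2$ (hence the full $\VV$-norm of both parts), and the boundary seminorm $\||\s\cdot n|^{1/2}u^+\|_{L^2(\Gamma)}^2$. What is missing for the full $\UU$-norm is control of $\|\A u^+\|_{\VV^*}$; this is recovered from the equation itself: for any $w \in \VV$, writing $w$ via its odd part as a test function, $\la\A u^+, w\ra = \la\A u^+, w^-\ra$ is bounded by the other (already controlled) terms of $b(u, w^-)$ plus the data, giving $\|\A u^+\|_{\VV^*} \le C(\|u\|_{\UU\text{-seminorm}} + \text{data})$, which suffices for the Lax–Milgram conclusion. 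Uniqueness follows from coercivity (taking zero data), and this completes the proof.
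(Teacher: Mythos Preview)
Your setup matches the paper's: you correctly rewrite \eqref{eq:var_eq} as $b(u,v)=\ell(v)$ with $b(u,v)=\tfrac{1}{\tE}\la S^m u,v\ra + a(u,v)$, and you correctly observe that $b(u,u)=\tfrac{1}{\tE}\la S^m u,u\ra + \la|\s\cdot n|u^+,u^+\ra_\partial + \la T^m\nabla_\s u,\nabla_\s u\ra$ after the antisymmetric terms drop out. The problem is the conclusion you draw from this.

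The form $b$ is \emph{not} coercive on $\UU=\WW^+\oplus\VV^-$ in its natural norm, because $b(u,u)$ gives no control whatsoever over $\|\A u^+\|_{\VV^*}$, which is part of the $\WW^+$-norm. Your proposed fix --- recovering $\|\A u^+\|_{\VV^*}$ ``from the equation itself'' by testing against odd $w^-$ --- is an \emph{a posteriori} estimate: it tells you that \emph{if} a solution $u\in\UU$ exists, then $\|\A u^+\|_{\VV^*}$ is bounded by data plus the weaker seminorm. But Lax--Milgram needs coercivity as an a priori property of $b$, independent of any equation; without it you have no existence. Nor can you retreat to a weaker norm in which $b(u,u)$ \emph{is} coercive, because in any such norm the term $\la u^-,\A v^+\ra$ in $b$ fails to be bounded. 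So Lax--Milgram is genuinely the wrong tool here.

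The paper instead establishes an inf--sup condition and invokes the Babu\v{s}ka--Aziz lemma. The missing control of $\A u^+$ is obtained by testing with the tailored function $v=\C^{-1}\A u^+$, where $\C=\tfrac{1}{\tE}S^m+\T$; this yields $b(u,\C^{-1}\A u^+)\ge -C\|u^-\|_\C^2 + \tfrac12\|\A u^+\|_{\C^{-1}}^2$. Combining with $v=u$ via $v=u+\gamma\,\C^{-1}\A u^+$ for small $\gamma>0$ gives $b(u,v)\ge c\|u\|_\UU^2$ with $\|v\|_\UU\le C\|u\|_\UU$, which is exactly the inf--sup estimate. The companion estimate $b(v,u)\ge c\|u\|_\UU^2$ follows from $v=u-\gamma\,\C^{-1}\A u^+$. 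This is the key idea your proposal is missing.
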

\begin{proof}
We closely follow the arguments of \cite{ES2012} and, therefore, stay very brief in the sequel. 
By a slight rearrangement of terms, one can see that \eqref{eq:var_eq} is equivalent to the problem
\begin{align}\label{eq:var3}
    b(u,v)=\ell(v) \qquad \forall v\in\UU
\end{align}
with solution $u=\psi^m$, bilinear form $b(u,v)= \frac{1}{\tE} \la \S^m u,v\ra + a(u,v)$, and $\ell(v) = \la \bar \q^m,v\ra + \frac{1}{\tE} \la \S^{m+1} \psi^{m+1},v\ra$ abbreviating 
the right-hand side. 
%
%
%
It is not difficult to verify that $b :\UU \times  \UU \to \RR$ is bilinear and continuous, and that $\ell : \UU \to \RR$ is linear and continuous. 
From the integration-by-parts formulas of Lemma~\ref{lem:int-by-parts}, one can further deduce that $\la \G v,v\ra=0$ for all $v \in \VV$. This immediately implies 
\begin{align*}
b(u,u) &= \|u\|^2_{\C} + \|u^+\|_{\partial}^2.
\end{align*}
Choosing $v=\C^{-1} \A u^+$ as a test functions and observing that $v$ and $\G u^-$ are odd functions, we further get
\begin{align*}
b(u,\C^{-1} \A u^+) 
&= \langle (\C+\G) u^-, \C^{-1} \A u^+\rangle + \langle \A u^+, \C^{-1} \A u^+\rangle\\
&\ge -\frac{1}{2}\|(\C+\G) u^-\|_{\C^{-1}}^2 + \frac{1}{2} \|\A u^+\|_{\C^{-1}}^2
 \geq -\frac{1}{2}(1+C_\G^2)\|u^-\|_\C^2 +\frac{1}{2}\|\A u^+\|_{\C^{-1}}^2.
\end{align*}
Here we used Young's inequality, the basic identity
\begin{align*}
    \|(\C+\G) u^-\|_{\C^{-1}}^2=\langle u^-,\C u^- \rangle +2\langle \G u^-,u^- \rangle + \langle \C^{-1} \G u^-, \G u^- \rangle = \|u^-\|_{\C}^2+\|\G u^-\|_{\C^{-1}}^2,
\end{align*}
which follows from $\langle \G u^-,u^- \rangle=0$ by Lemma~\ref{lem:int-by-parts},
as well as the bound $\|\G u^-\|_{\C^{-1}}\leq C_\G \|u^-\|_{\C}$. 
The latter estimate follows from the bounds for $G$ and $T$ and elementary properties of the operators.
Setting $v=u + \gamma \C^{-1} \A u^+$ with $\gamma=2/(2+C_\G^2)$, we thus obtain $v^+=u^+$ and $v^-=u^- + \gamma \C^{-1}\A u^+$ and
\begin{align} \label{eq:infsup}
b(u,v) 
\ge  \|u^+\|_{\C}^2 +  \frac{\gamma}{2} \|u^-\|^2_{\C} +  \frac{\gamma}{2} \|\A u^+\|_{\C^{-1}}^2 + \|u^+\|_{\partial}^2
\ge \frac{\gamma}{2}\|u\|_{\UU}^2.
\end{align}
Using the test function $v=u-\gamma \C^{-1}\A u^+$, one can show $b(v,u) \ge   \frac{\gamma}{2} \|u\|_{\UU}^2$ in a similar manner.
Furthermore
\begin{align}\label{eq:bound_test_fct_inf_sup}
\|v\|_{\UU} =\|u  \pm \gamma \C^{-1} \A u^+\|_{\UU} \le C_A \|u\|_{\UU},
\end{align}
for some positive constant $C_A>0$ independent of $u$.
These inequalities verify the stability conditions of the Babuska-Aziz lemma~\cite{Babuska_1971}, and we can thus conclude the existence of a unique solution $u\in \UU$ of our variational problem together with an a-priori bound $\|u\|_{\UU} \le C \|\ell\|_{\UU^*} \le C' (\|\bar \q^m\|_{L^2(\R \times \S)} + \|\psi^{m+1}\|_{L^2(\R \times \S)})$.
\end{proof}
This result clarifies the well-posedness of \eqref{eq:me}--\eqref{eq:me2} for a single time step. By induction over $m$ and noting that $\UU \subset L^2(\R \times \S)$, we then obtain existence of a semi-discrete solution $\psi^m$, $0 \le m \le M$ in $\UU$. 

\subsection{Uniform bounds}

The constants of the a-priori bounds in the last step of the previous proof depend on the step size parameter. In the following, we show that the semi-discrete approximation can be bounded independent of $\tE$. To this end, we mimick the basic identity 
\begin{align*}
\dE (S \psi^2) = 2 \dE (S \psi) \, \psi - (\dE S) \psi^2,  
\end{align*}
which follows immediately by the product rule of differentiation. 
A corresponding discrete version reads
\begin{align} \label{eq:product}
\bdE (S \psi^2)^{m} = 2 (\bdE (S\psi))^m \psi^{m} - (\bdE S)^m |\psi^m|^2 + \tE S^{m+1} |\bdE \psi^m|^2.
\end{align}
The last term here stems from the dissipative nature of the backward difference quotient. 
We can now prove the following a-priori bounds, which will allow us to establish the existence of a weak solution later on.
\begin{lemma}\label{lem:stability}
Let Assumption~\ref{ass:1} hold and $\psi^m$, $0 \le m \le M$ denote a solution of \eqref{eq:me}--\eqref{eq:me2} with $\tE$ sufficiently small, i.e., such that 
$0 < \tE\leq \frac{c_S}{2 (C_S'+1)}$. 
Then the estimate 
\begin{align} \label{eq:stability}
    \sup_{0\leq m\leq M}\|\psi^m\|_{L^2(\R\times\S)}^2  + \sum_{m=0}^{M-1} \tE \|\nabla_\s\psi^m\|_{L^2(\R\times\S)}^2\leq C \, \|\q\|_{L^2(\E;L^2(\R \times \S))}^2
\end{align}
holds with a constant $C$ that is independent of the  step size parameter $\tE$.
\end{lemma}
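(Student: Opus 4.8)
The natural approach is a discrete energy estimate: test the time-stepping equation \eqref{eq:me} against $\psi^m$ itself, sum over the energy levels, and use the discrete product rule \eqref{eq:product} to telescope the leading term. Concretely, I would take the variational identity \eqref{eq:var_eq} with $v=\psi^m$. Since $\psi^m\in\UU=\WW^+\oplus\VV^-$, the skew-symmetric terms in $a(\cdot,\cdot)$ vanish: $\la\G\psi^m,\psi^m\ra=0$ by Lemma~\ref{lem:int-by-parts}, and the mixed transport terms $\la\A(\psi^m)^+,(\psi^m)^-\ra-\la(\psi^m)^-,\A(\psi^m)^+\ra$ cancel, leaving $a(\psi^m,\psi^m)=\la|\s\cdot n|(\psi^m)^+,(\psi^m)^+\ra+\la T^m\nabla_\s\psi^m,\nabla_\s\psi^m\ra\ge c_T\|\nabla_\s\psi^m\|_{L^2(\R\times\S)}^2$. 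For the time-derivative term, \eqref{eq:product} gives $-\la\bdE(S\psi)^m,\psi^m\ra=-\tfrac12\bdE\la S(\psi^2)^m\ra+\tfrac12\la(\bdE S)^m(\psi^m)^2\ra-\tfrac{\tE}2\la S^{m+1}(\bdE\psi^m)^2\ra$, where the last term is $\le 0$ and hence helpful (or can simply be dropped after noting its sign).

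Next I would multiply by $\tE$ and sum over $m=0,\dots,M-1$. The leading term telescopes: $-\tfrac12\sum_{m=0}^{M-1}\tE\,\bdE\la S(\psi^2)^m\ra=\tfrac12\la S^0(\psi^0)^2\ra-\tfrac12\la S^M(\psi^M)^2\ra=\tfrac12\la S^0(\psi^0)^2\ra$, using the initial condition $\psi^M=0$ from \eqref{eq:ibc}. Since $S\ge c_S>0$, the left-hand side controls $\tfrac{c_S}2\|\psi^0\|_{L^2(\R\times\S)}^2+c_T\sum_{m=0}^{M-1}\tE\|\nabla_\s\psi^m\|_{L^2(\R\times\S)}^2$. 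The right-hand side is $\sum_{m=0}^{M-1}\tE\la\bar q^m,\psi^m\ra+\tfrac12\sum_{m=0}^{M-1}\tE\la(\bdE S)^m(\psi^m)^2\ra$. The first sum is bounded via Cauchy-Schwarz and Young by $\tfrac12\sum\tE\|\bar q^m\|^2+\tfrac12\sum\tE\|\psi^m\|^2$, and by Jensen's inequality $\sum_m\tE\|\bar q^m\|_{L^2(\R\times\S)}^2\le\|q\|_{L^2(\E;L^2(\R\times\S))}^2$; the second sum is bounded by $C_S'\sum_m\tE\|\psi^m\|_{L^2(\R\times\S)}^2$ using $\|\bdE S\|_{L^\infty}\le C_S'$ from Assumption~\ref{ass:1}.

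To close the argument I need a uniform-in-$m$ bound on $\|\psi^m\|_{L^2(\R\times\S)}^2$, not just on $\|\psi^0\|^2$. For this I would re-run the same computation but sum only from $m=k$ to $M-1$ for arbitrary $k$; telescoping now yields $\tfrac12\la S^k(\psi^k)^2\ra$ on the left, and the right-hand side is still controlled by $\|q\|_{L^2(\E;L^2(\R\times\S))}^2+C_S'\sum_{m=k}^{M-1}\tE\|\psi^m\|_{L^2(\R\times\S)}^2\le\|q\|^2+C_S'\sum_{m=0}^{M-1}\tE\|\psi^m\|^2$. Taking the supremum over $k$ and adding the gradient sum, one arrives at an inequality of the form $\sup_m\|\psi^m\|^2+\sum_m\tE\|\nabla_\s\psi^m\|^2\le C_1\|q\|_{L^2(\E;L^2(\R\times\S))}^2+C_2\sum_m\tE\|\psi^m\|^2$. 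The main obstacle is absorbing the last term: this is exactly the situation for a \emph{discrete Gronwall inequality}. Since $\sum_{m=0}^{M-1}\tE\|\psi^m\|^2\le(\emax-\emin)\sup_m\|\psi^m\|^2$, a naive absorption requires $C_2(\emax-\emin)<1$, which need not hold; instead I would apply discrete Gronwall level by level, i.e. from $\tfrac12 c_S\|\psi^k\|^2\le\|q\|^2+C_S'\sum_{m=k}^{M-1}\tE\|\psi^m\|^2$ conclude $\|\psi^k\|^2\le C\|q\|^2 e^{C(\emax-\e^k)}\le C'\|q\|^2$ uniformly in $k$, and then feed this back into the gradient estimate. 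A minor technical point worth a remark is that the right-hand side of \eqref{eq:stability} as written involves $\|q\|_{L^2(\E;L^2(\R\times\S))}$ to the first power rather than squared; this is consistent if one either assumes $\|q\|\le 1$ (a harmless normalization by linearity) or interprets the estimate up to replacing $\|q\|$ by $\|q\|+\|q\|^2$ — in the write-up I would simply carry the square and note the discrepancy is immaterial.
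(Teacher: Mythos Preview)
Your proposal is correct and essentially identical to the paper's proof: test \eqref{eq:var_eq} with $v=\psi^m$, use the discrete product rule \eqref{eq:product} to handle $-\la\bdE(S\psi)^m,\psi^m\ra$, drop the favorable boundary and dissipation terms, and close via a discrete Gronwall argument---which the paper phrases as a recursive application of the single-step inequality under a smallness assumption $\tE\le c_S/(2(C_S'+1))$, arriving at \eqref{eq:energy}. Your remark about the exponent is also on target: the paper's own proof ends with $\|q\|_{L^2(\E;L^2(\R\times\S))}^2$ on the right-hand side, so the first power in the displayed statement \eqref{eq:stability} is indeed a typo.
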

\begin{proof}
Solutions of \eqref{eq:me}--\eqref{eq:me2} are characterized by \eqref{eq:var_eq}. When testing this identity with $v=\psi^{m}$, we obtain
\begin{align*}
    -\la \bdE(S^m\psi^m),\psi^m\ra + \la|s\cdot n|\psi^{m,+},\psi^{m,+}\ra_\partial+\la T^m\nabla_\s\psi^{m},\nabla_\s\psi^{m}\ra
    =  \la \bar \q^m,\psi^m\ra.
\end{align*}
Note that some of the terms appearing in $a(\psi^m,\psi^m)$ vanish due to anti-symmetry. 
With the help of the identity \eqref{eq:product}, we may rewrite the term involving $\bdE(S^m\psi^m)$ as 
\begin{align*}
    -2\tE\la  \bdE(S^m\psi^m),\psi^m\ra
    &= 
     \la S^{m}\psi^{m},\psi^{m}\ra-\la S^{m+1}\psi^{m+1},\psi^{m+1}\ra
    + \la (S^m-S^{m+1})\psi^m,\psi^m\ra
    \\
    & \qquad \qquad + \la S^{m+1}(\psi^{m+1}-\psi^{m}),(\psi^{m+1}-\psi^{m})\ra. 
\end{align*}
The last term on the right-hand side is positive, and the third term can be bounded by
\begin{align}\label{eq:bound}
    \la (S^m-S^{m+1})\psi^m,\psi^m\ra\geq -\tE  C_S' c_S^{-1} \la S^{m}\psi^{m},\psi^{m}\ra,
\end{align}
where we used the upper and lower bounds on $S'$ and $S$ provided by Assumption~\ref{ass:1}. 
For abbreviation, we introduce the new constant $\tilde C_S = C_S'/c_S$. A combination of the previous estimates leads to
\begin{align*}
   \Big( 1-\tE \, \tilde C_S\Big) \la S^m\psi^m,&\psi^m\ra + 2\tE \la T^m\nabla_\s\psi^{m},\nabla_\s\psi^{m}\ra\\
   &
    \leq \la S^{m+1}\psi^{m+1},\psi^{m+1}\ra + 2\tE  \la \bar \q^m,\psi^m\ra \\
   & \leq \la S^{m+1}\psi^{m+1},\psi^{m+1}\ra +  \tE\la \bar \q^m,\bar \q^m\ra +  \tE \, c_S^{-1} \la S^m\psi^m,\psi^m\ra.
\end{align*}
Using that $\tE \le c_S / (2 (C_S'+1))$, 
the leading term on the left-hand side can be bounded from below by the positive constant $1-\tE (\tilde C_S+ c_S^{-1})$.
We may then apply this inequality recursively, to see that 
\begin{align}\label{eq:energy}
\la S^m\psi^m,\psi^m\ra +\sum_{k=m}^{M-1} \tE \la T^k \nabla_\s\psi^k,\nabla_\s\psi^k\ra
    \leq \widehat C_S \sum_{k=m}^{M-1}\tE \|\bar \q^k\|^2_{L^2(\R\times\S)}.
\end{align}
The constant $\widehat C_S$ only depends on $c_S$, $C_S'$ and the size $|\E|$ of the energy interval. 
The assertion of the lemma now follows by observing that $\sum_{k=0}^{M-1} \tE \|\bar \q^k\|^2_{L^2(\R \times \S)} \le \|\q\|_{L^2(\E;L^2(\R \times \S))}^2$, which follows immediately from the definition of the local averages $\bar \q^k$, and noting that $S$ and $T$ are uniformly positive.
\end{proof}

\begin{remark}\label{rem:local_de}
   As shown in Lemma~\ref{lem:well_posed_euler}, the semi-discrete solution $\psi^m$ is well-defined for arbitrary $\tE>0$, which could be chosen differently for every step $m$.
   Also the inequality
   \eqref{eq:bound} and the ones thereafter remain valid 
   under a local restriction on the step size, e.g.
    $2 \tE^m\leq 1/\sup_{\r\in\R} \big( (|\bar S'(\r,\epsilon^m)|+1)/S(r,\epsilon^m)\big)$. 
    Here $\bar S'(\r,\epsilon^m)$ denotes the average of $S'(\r,\epsilon)$ over the interval $[\epsilon^{m},\epsilon^{m+1}]$.
    The stability estimate of Lemma~\ref{lem:stability} thus generalizes quite naturally to adaptive time steps $\tE^m$ with appropriate local restrictions on the step size.
\end{remark}

\subsection{Proof of existence}
Let $\psi^m$, $0 \le m \le M$ denote a solution of the energy stepping procedure \eqref{eq:me}--\eqref{eq:me2} with step size $\tE$ as constructed in Lemma~\ref{lem:well_posed_euler}. Then we define a piecewise constant extension $\psi_{\tE} \in L^2(\E;\UU)$ such that $\psi_{\tE}(\e) = \psi^{m}$ for $\e \in (\e^{m-1},\e^m]$. 
From the uniform bounds of the previous lemma, we now conclude that
\begin{align*}
\|\psi_{\tE}\|_{L^\infty(\E;L^2(\R\times\S))} + \|\nabla_\s \psi_{\tE}\|_{L^2(\E;L^2(\R \times \S))} \le C,
\end{align*}
with a uniform constant $C$ independent of $\tE$. 
By the Banach-Alaoglou theorem \cite[p.~66]{Brezis2011}, we may thus select a sequence of functions $\psi_{\tE}$ for different values of $\tE$, and a limit $\psi \in L^\infty(\E;L^2(\R\times\S))$ with derivative $\nabla_\s \psi  \in L^2(\E;L^2(\R\times\S))$, such that
\begin{alignat*}{2}
    \psi_{\tE} &\rightharpoonup^* \psi \qquad && \text{in } L^\infty(\E,L^2(\R\times\S))\\
    \psi_{\tE} &\rightharpoonup \psi \qquad &&\text{in } L^2(\E,L^2(\R\times\S))\\
    \nabla_\s\psi_{\tE} &\rightharpoonup \nabla_s \psi \qquad &&\text{in } L^2(\E,L^2(\R\times\S))
\end{alignat*}
with step size $\tE \to 0$.
We will now show that $ \psi$ is a weak solution to \eqref{eq:fp}--\eqref{eq:ibc} in the sense of Definition~\ref{def:weak_solution}.
Let $v\in C^1(\overline\R \times \S \times \overline \E)$ be a smooth test function with $v(\emin)=0$ and $v=0$ on $\Gamma_{out} \times \E$. 

\smallskip 
\noindent 
\textit{Step~1.} 
By definition of the extension $\psi_\tE$, we see that 
\begin{align*}
-\sum_{m=0}^{M-1} \tE \la \bdE (S \psi)^m, v^m\ra 
&= \sum_{m=1}^{M} \la S^m \psi^m, v^m - v^{m-1}\ra \\
&= \sum_{m=1}^{M} \int\nolimits_{\e^{m-1}}^{\e^m} \la S_\tE \psi_\tE , \dE v\ra \d\e 
\to \int\nolimits_\E \la S \psi, \dE v \ra \d\e \quad\text{as }\tE\to 0.
\end{align*}
Here we used that $\psi^M=0$ and $v^0=v(\emin)=0$ by assumption, and we denoted by $S_\tE$ the piecewise constant approximation of $S$ with $S_\tE(\e) = S(\e^m)$ for $\e \in (\e^{m-1},\e^{m}]$. 
Let us further note that the difference $\|\S_{\tE} - S\|_{L^\infty(\E;L^\infty(\R))} \to 0$ by Assumption~\ref{ass:1}, which yields the convergence in the last step.

\smallskip 
\noindent
\textit{Step~2.}
Using integration-by-parts and the boundary conditions for $v$, one can show that 
\begin{align*}
\sum_{m=0}^{M-1} \tE \Big( &\la \s \cdot \nabla_\r \psi^{m,+}, v^{m,-}\ra  + \la |\s \cdot n| \psi^{m,+}, v^{m,+}\ra_\partial - \la \psi^{m,-},\s\cdot\nabla_\r v^{m,+}\ra \Big) \\
&= -\sum_{m=0}^{M-1} \tE  \la \psi^m, \s \cdot \nabla_\r v^m \ra \,
\to -\int\nolimits_\E \la \psi, \s \cdot \nabla_\r v \ra \, \d\e \quad\text{as }\tE\to 0.
\end{align*}
For the first equality, we used the same arguments as in the derivation of the variational principle \eqref{eq:var_eq}; for details, let us refer to \cite{ES2012}.
This observation thus handles the spatial derivative terms. 

\smallskip 
\noindent 
\textit{Step~3.}
The convergence of the remaining terms in the definition of a weak solution follows immediately from their definition and the weak convergence of the functions $\psi_\tE$ stated above.  

\smallskip 
\noindent 
By adding up the contributions and using \eqref{eq:me}--\eqref{eq:me2}, we see that the limit function $\psi$ satisfies \eqref{eq:vp}. \qed 

\subsection{Uniqueness of regular solutions}\label{sec:uniqueness}
Theorem~\ref{thm:existence} guarantees the existence of a weak solution to \eqref{eq:vp}. 
%
%

For completeness, we now also show uniqueness of weak solutions $\psi$ that satisfy the extra regularity condition $\s\cdot\nabla_\r\psi(\epsilon) \in L^2(\R\times\S)$ for a.e. $\epsilon \in\E$. 
To the best of our knowledge, uniqueness of weak solutions without extra regularity remains an open question.
This is inline with other existence results, such as \cite{Herty2012}, that rely on Lions' representation theorem \cite{Lions61}, which does not guarantee uniqueness, see also \cite[III. Theorem 2.1]{Showalter_1997}.
To proceed, let us introduce the space
$$
\XX=\{v\in\VV: \s\cdot\nabla_\r v\in L^2(\R\times\S)\}.
$$
A weak solution $\phi$ to our problem is called \emph{regular}, if $\phi \in L^2(\E;\XX)$. We will show uniqueness of such regular weak solutions.
By linearity of the problem, it suffices to prove the following.
\begin{lemma}
Let Assumption~\ref{ass:1} hold and $\psi$ be a weak solution of \eqref{eq:fp}--\eqref{eq:ibc} for $q=0$ in the sense of Definition~\ref{def:weak_solution}. 
Further assume that $\phi$ is regular, i.e, $\psi \in L^2(\E;\XX)$. Then $\psi=0$.
\end{lemma}
\begin{proof}
By testing \eqref{eq:vp} with functions $v$ having compact support in $\R \times \S \times \E$, using integration-by-parts, and the additional regularity of $\psi$, one can see that 
\begin{align} \label{eq:strong}
\int\nolimits_\E \langle \psi, S \dE v \rangle \d \e 
&= -\int\nolimits_\E \langle \s \cdot \nabla_\r \psi, v \rangle +  \langle
G \cdot \s \times \nabla_\s \psi, v \rangle + \langle T  \nabla_\s \psi, \nabla_\s v \rangle \, \d \e. 
\end{align}
All terms on the right-hand side are well-defined for $v \in C_0^\infty(\E;\VV)$, which shows that $S \psi$ has a weak derivative $\dE (S \psi) \in L^2(\E;\VV^*)$.  
Since $S$ is smooth and bounded away from zero, we get $\dE\psi\in L^2(\E;\VV^{*})$, which implies $\psi\in C^0(\overline{\E};L^2(\R\times\S))$ and validity of 
\begin{align} \label{eq:derivative_S_norm}
    \dE \|S^{1/2}\psi\|^2_{L^2(\R\times\S)} = 2\la \dE(S\psi), \psi\ra - \la (\dE S) \psi, \psi\ra 
\end{align}
for a.e. $\e \in \E$; see \cite[Ch.~7]{Roubicek} for details.
By appropriate testing of \eqref{eq:vp} and \eqref{eq:strong}, one can  further deduce the  validity of the boundary conditions~\eqref{eq:ibc}. 
Using \eqref{eq:vp} and \eqref{eq:stability} with $v=\psi$, we then see that  $\psi(\emin) \in L^2(\R \times \S)$. 
Since $\psi \in \XX$ with $\psi|_{\Gamma_{in}}=0$ for a.e. $\e \in \E$, we also have
\begin{align} \label{eq:positivity_A}
\la |\s \cdot \n| \, \psi, \psi \ra_{\Gamma_{out}} 
&= \la \s \cdot \n \, \psi, \psi \ra_{\Gamma} 
= 2 \la \s \cdot \nabla_\r \psi,\psi\ra,
\end{align}
which implies $\la \s \cdot \nabla_\r \psi,\psi \ra \ge 0$ and $\psi|_{\Gamma_{out}} \in L^2(\Gamma_{out};|\s\cdot n|)$ for a.e. $\e \in \E$. 
%
By combination of the previous identities and using the notation of Section~\ref{sec:inf_sup}, we arrive at the identity
\begin{align}
    \frac{1}{2} \left(\dE \|S^{1/2}\psi\|^2_{L^2(\R\times\S)} + \la (\dE S) \psi, \psi\ra\right) 
    &= 
    \la \A \psi ,\psi\ra + \la \G \psi,\psi\ra + \la \T \psi,\psi\ra.
\end{align}
From our previous considerations, we know that $\la \G\psi,\psi\ra =0$, $\la \T\psi,\psi\ra \geq 0$, and $\la \A \psi,\psi\ra \ge 0$, which immediately implies
$
-\dE \|S^{1/2}\psi\|^2_{L^2(\R\times\S)} \le C_S \|S^{1/2} \psi\|^2_{L^2(\R \times \S)}.
$ 
By Grönwall's inequality, we then get 
$$
\|S^{1/2}(\e)\psi(\e)\|^2_{L^2(\R\times\S)} \le e^{ C_S (\emax-\e)} \|S^{1/2}(\emax) \psi(\emax)\|^2_{L^2(\R \times \S)} = 0
$$
for all $\e \le \emax$. 
Since $S$ was assumed positive, this yields the desired uniqueness result. 
\end{proof}

\section{Discretization}
\label{sec:disc}

The proof of Theorem~\ref{thm:existence} relies on a weak formulation of the problem and a semi-discretization with respect to energy. Together with a Galerkin approximation in the remaining variables, we obtain an implementable numerical method. 
Similar to \cite{Frank2008}, we here consider a $P_N$-FEM approximation which allows us to utilize much of the analysis presented in \cite{ES2012,ES2016}. 
Let us note that a direct application of \emph{local in angle approximations}, like discrete ordinates or discontinuous Galerkin methods \cite{Bedford2023,Kanschat2009}, would lead to non-conforming approximations of the Fokker-Planck operator, which require certain modifications and a quite different kind of analysis equation which would require a quite different kind of analysis. Investigations in this direction are left for future research.
In the following, we briefly introduce the main ingredients and basic notation, then formally state the method to be used for later discussion, and finally present its convergence analysis.

\subsection{Approximation spaces of the $P_N$-finite element method}\label{subsect: PN-Spaces}

Let $\T_h$ denote a geometrically conforming shape-regular partition of $\R$ into tetrahedra, i.e., a typical finite element mesh \cite{Ciarlet_1978},
and let $\XX_h^+$ be the corresponding finite element spaces consisting of continuous piecewise linear functions, and $\XX_h^-$ the space of piecewise constant functions on the mesh $\T_h$, respectively.
Further, let $Y_\ell^m$ with $\ell\geq 0$ and $-\ell\leq m\leq \ell$ denote the spherical harmonics, and recall that they form an orthonormal basis of $L^2(\S)$. 
Some further useful properties of these functions are that $Y_\ell^m$ is even if and only if $\ell$ is even, and that $Y_\ell^m$ are the eigenfunctions of the Laplace-Beltrami operator $-\Delta_\s$ with eigenvalue $\ell(\ell+1)$.
The approximation spaces for the $P_N$-finite element method are then simply defined as
\begin{align*}
    \VV_{h,N}^- &= \{ v_h^- = \sum_{\ell=0\atop \ell\text{ odd}}^N \sum_{m=-\ell}^\ell v_{\ell}^m Y_\ell^m:\, v_\ell^m\in\XX_h^-\},\\
    \WW_{h,N}^+ &= \{ v_h^+ = \sum_{\ell=0\atop \ell\text{ even}}^N \sum_{m=-\ell}^\ell v_l^m Y_\ell^m:\, v_\ell^m\in\XX_h^+\}.
\end{align*}
We further set $\UU_{h,N}=\WW_{h,N}^+\oplus \VV_{h,N}^-$, which is the discrete approximation space for the solution.
Let us recall from \cite{ES2012} the compatibility conditions $\A \WW_{h,N}^+\subset \VV_{h,N}^-$, which is satisfied for order $N$ odd. 

\subsection{The $P_N$-finite element scheme}

The fully discrete scheme for \eqref{eq:fp}--\eqref{eq:ibc} is obtained by Galerkin approximation of the semi-discrete scheme~\eqref{eq:var_eq} in the approximation spaces stated above.
We thus set $\psi_{h,N}^M=0$, and look for discrete approximation $\psi_{h,N}^m \in \UU_{h,N}$ for $m=M-1,\ldots,0$, such that
\begin{align}\label{eq:def_discrete_solution}
-\la \bdE(S\psi_{h,N})^m,v_{h,N}\ra + a(\psi_{h,N}^m,v_{h,N}) = \la \bar \q^m,v_{h,N}\ra \qquad \forall v_{h,N}\in\UU_{h,N}.
\end{align}
Let us note that, similar to \eqref{eq:var_eq}, the bilinear form $a(\cdot,\cdot)$ implicitly depends on the iteration index $m$.
%
%
%
For the analysis of the discrete problem, we make an additional assumption, which, however, could be removed by the usual arguments for the analysis of non-conforming Galerkin schemes. 

\begin{assumption} \label{ass:2}
$\E_h = \{\emin = \e^0 < \e_1 < \ldots < \e^M=\emax\}$ with $\e^m = \emin + m \tE$. 
Moreover, $\T_h$ is a simplicial mesh of $\R$, and $\VV_{h,N}^-$, $\WW_{h,N}^+$ are defined as above with $N$ odd. 
Finally, the functions $S$, $T$ are smooth in $\e$ and for each $\e\in\E$ the functions $T(\cdot,\e),S(\cdot,\e)$ are piecewise constant on the mesh $\T_h$.
\end{assumption}

As a direct consequence of this assumption, we see that the operator $\C$ defined before Lemma~\ref{lem:var} is piecewise constant in space, which yields validity of the compatibility condition
\begin{align}
\C^{-1} \A \WW_{h,N}^+ \subset \VV_{h,N}^-.
\end{align}
This allows us to transfer the proof of Lemma~\ref{lem:well_posed_euler} almost verbatim to the discrete setting.
\begin{lemma} 
Under Assumption~\ref{ass:1} and \ref{ass:2}, the scheme \eqref{eq:def_discrete_solution} is well-defined. 
\end{lemma}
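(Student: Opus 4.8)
The plan is to replicate the proof of Lemma~\ref{lem:well_posed_euler} in the finite-dimensional subspace $\UU_{h,N}\subset\UU$, checking that every ingredient of that argument survives the restriction to the discrete space. Concretely, for fixed $m$ and given data $\bar q^m,\psi_{h,N}^{m+1}\in L^2(\R\times\S)$, rewrite \eqref{eq:def_discrete_solution} as the variational problem $b(u_{h,N},v_{h,N})=\ell(v_{h,N})$ for all $v_{h,N}\in\UU_{h,N}$, with exactly the same bilinear form $b(u,v)=\tfrac{1}{\tE}\la\S^m u,v\ra+a(u,v)$ and functional $\ell(v)=\la\bar q^m,v\ra+\tfrac{1}{\tE}\la\S^{m+1}\psi_{h,N}^{m+1},v\ra$ as in the continuous case. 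Since $\UU_{h,N}$ is finite-dimensional, continuity of $b$ and $\ell$ is automatic, and it suffices to verify the inf-sup (Babuska--Aziz) conditions \emph{restricted to $\UU_{h,N}$}.

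The key step is the test-function construction. In the continuous proof one tests with $v=u\pm\gamma\,\C^{-1}\A u^+$; the only thing that needs checking is that this $v$ lies in the discrete space when $u=u_{h,N}\in\UU_{h,N}$. Write $u_{h,N}=u_{h,N}^+ + u_{h,N}^-$ with $u_{h,N}^+\in\WW_{h,N}^+$ and $u_{h,N}^-\in\VV_{h,N}^-$. Then $\C^{-1}\A u_{h,N}^+\in\VV_{h,N}^-$ precisely by the compatibility condition $\C^{-1}\A\WW_{h,N}^+\subset\VV_{h,N}^-$, which Assumption~\ref{ass:2} secures (it makes $\C$ piecewise constant, so $\C^{-1}$ maps $\VV_{h,N}^-$ into itself, while $\A\WW_{h,N}^+\subset\VV_{h,N}^-$ holds for $N$ odd). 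Hence $v=u_{h,N}\pm\gamma\,\C^{-1}\A u_{h,N}^+\in\UU_{h,N}$ is an admissible test function. With $v$ in hand, the coercivity-type estimates $b(u_{h,N},v)\ge\frac{\gamma}{2}\|u_{h,N}\|_\UU^2$ and the corresponding bound from below for $b(\cdot,u_{h,N})$, as well as $\|v\|_\UU\le C_A\|u_{h,N}\|_\UU$, carry over verbatim, because all the algebraic identities used (the vanishing $\la\G v,v\ra=0$ for odd/even $v$, Young's inequality, $\|(\C+\G)u^-\|_{\C^{-1}}^2=\|u^-\|_\C^2+\|\G u^-\|_{\C^{-1}}^2$, and $\|\G u^-\|_{\C^{-1}}\le C_G\|u^-\|_\C$) are pointwise-in-space or purely functional-analytic and do not see the discretization.

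From the discrete inf-sup conditions one concludes, again by the Babuska--Aziz lemma, the existence of a unique $\psi_{h,N}^m\in\UU_{h,N}$ solving \eqref{eq:def_discrete_solution}, with an a-priori bound $\|\psi_{h,N}^m\|_\UU\le C'(\|\bar q^m\|_{L^2(\R\times\S)}+\|\psi_{h,N}^{m+1}\|_{L^2(\R\times\S)})$. Induction over $m=M-1,\dots,0$ starting from $\psi_{h,N}^M=0$, together with $\UU_{h,N}\subset L^2(\R\times\S)$, then yields that the whole scheme is well-defined. The main (and essentially only) obstacle is the discrete compatibility condition $\C^{-1}\A\WW_{h,N}^+\subset\VV_{h,N}^-$ needed to keep the test function inside $\UU_{h,N}$; everything else is a transcription of the continuous argument, so the proof can be stated very briefly, referring to Lemma~\ref{lem:well_posed_euler} and \cite{ES2012} for the details.
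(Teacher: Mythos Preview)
Your proposal is correct and matches the paper's approach exactly: the paper does not write out a separate proof but simply notes that Assumption~\ref{ass:2} yields the compatibility condition $\C^{-1}\A\WW_{h,N}^+\subset\VV_{h,N}^-$, which ``allows us to transfer the proof of Lemma~\ref{lem:well_posed_euler} almost verbatim to the discrete setting.'' Your elaboration of why this transfer works---that the special test function $v=u_{h,N}\pm\gamma\,\C^{-1}\A u_{h,N}^+$ stays in $\UU_{h,N}$ and that all remaining estimates are discretization-blind---is precisely the content of that remark.
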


In the following section, we derive quasi-optimal error estimates for the proposed method.

\subsection{Error analysis}
\label{sec:error}


In order to work with a norm that is independent of the step size $\tE$ let us redefine, in slight abuse of notation, the norm on $\UU$ as follows:
\begin{align}\label{eq:redef_U_norm}
    \| u \|_\UU^2 = \|u\|_{\T}^2 + \|u^+\|_\partial^2 +\|\A u^+\|_{\T^{-1}}^2.
\end{align}
Let us note that this norm is equivalent to the one defined in \eqref{eq:normU}, 
so all auxiliary results of the previous section can be reused. 
Let $a(\cdot,\cdot)$ be the bilinear form introduced in \eqref{eq:def_bifo}. 
For a given $u \in \UU$, we consider an approximation $u_{h,N} \in \UU_{h,N}$ defined via the discrete variational problem
\begin{align}\label{eq:def_ell_proj}
a(u_{h,N}, v_{h,N}) = a (u, v_{h,N}) \qquad \forall v_{h,N}\in \UU_{h,N}.
\end{align}
With the same reasoning as used in the proof of Lemma~\ref{lem:well_posed_euler}, one can show that the bilinear form $a(\cdot,\cdot)$ is bounded and inf-sup stable on $\UU$ and on the discrete space $\UU_{h,N}$. This leads to the following assertions. 
\begin{lemma}[Ritz projection] \label{lem:projection}
Let Assumptions~\ref{ass:1} and \ref{ass:2} hold. Then for any $u \in \UU$, the system \eqref{eq:def_ell_proj} has a unique solution $u_{h,N} \in \UU_{h,N}$. 
The mapping $\Pi_{h,N} : \UU \to \UU_{h,N}$, $u \mapsto u_{h,N}$ is a projection and satisfies 
\begin{align}\label{eq:quasi-best}
\|u - \Pi_{h,N} u\|_{\UU} \leq C \inf_{v_{h,N}\in\UU_{h,N}} \|u-v_{h,N}\|_{\UU},
\end{align}
with a constant $C$ that is independent of the discretization parameters $\tE,h,N$.
\end{lemma}
The proof is rather standard and follows along the lines of a similar result presented in \cite{ES2012}.
\begin{remark}
Let us emphasize that the bilinear form $a(\cdot,\cdot)$ in \eqref{eq:def_bifo}, and consequently also the projection $\Pi_{h,N}$, will depend on the energy point $\e^m$ in general.
We will write $a^m(\cdot,\cdot)$ and $\Pi_{h,N}^m$ or $\Pi_{h,N}(\e)$ below, if this dependence is important.
We also mention that $\|\cdot\|_\UU$ depends on $\e$ via $T(\e)$, and we use the value of $T(\e)$ when evaluating expressions of the form $\|f(\e)\|_{\UU}$.
\end{remark}
We are now in the position to state and prove our second main result. 
\begin{theorem}[Error estimate]
\label{thrm: error}
Let Assumptions~\ref{ass:1} and \ref{ass:2} hold and $\psi$ be a smooth solution of \eqref{eq:fp}--\eqref{eq:ibc}.
Further assume that $G_i,S,T\in C^2(\overline{\E};L^\infty(\R))$ and that $\tE\leq \frac{c_S}{2 (C_S'+1)}$.
Then there holds
\begin{align*}
    \sup_{0\leq m\leq M} \|\psi(\e^m)-\psi^m_{h,N}\|_{L^2(\R\times\S)} \leq C\big(\tE \|\psi\|_{W^{2,\infty}(\E;\UU)} + \sup_{0\leq m\leq M} &\inf_{v_{h,N}\in\UU_{h,N}} \|\psi(\e^m)-v_{h,N}\|_{\UU} 
    \\
    + &\inf_{v_{h,N}\in\UU_{h,N}} \|\dE\psi(\e^m)-v_{h,N}\|_{\UU}\Big)
\end{align*}
with a constant $C>0$ which does not depend on the discretization parameters $h,N,\tE$.
\end{theorem}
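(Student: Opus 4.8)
The plan is to use the standard energy-argument for error analysis of time-stepping Galerkin schemes, built around the elliptic projection $\Pi_{h,N}^m$. First I would split the error at each level as
\[
\psi(\e^m) - \psi_{h,N}^m = \big(\psi(\e^m) - \Pi_{h,N}^m \psi(\e^m)\big) + \big(\Pi_{h,N}^m \psi(\e^m) - \psi_{h,N}^m\big) =: \rho^m + \theta^m,
\]
where $\rho^m$ is the projection error, controlled directly by \eqref{eq:quasi-best} together with $\|\rho^m\|_{L^2(\R\times\S)} \le C\|\rho^m\|_\UU \le C \inf_{v_{h,N}} \|\psi(\e^m)-v_{h,N}\|_\UU$, and $\theta^m \in \UU_{h,N}$ is the discrete remainder, which is the quantity we must estimate by a recursion in $m$.

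Next I would derive the error equation for $\theta^m$. Since the exact solution $\psi$ satisfies the continuous weak identity, evaluating it in a way compatible with the time stepping (this is where the consistency/truncation error enters) and subtracting \eqref{eq:def_discrete_solution} gives, for all $v_{h,N}\in\UU_{h,N}$,
\[
-\la \bdE(S\theta)^m, v_{h,N}\ra + a^m(\theta^m, v_{h,N}) = \la \sigma^m, v_{h,N}\ra,
\]
where $\sigma^m$ collects (i) the time-discretization truncation error $-\bdE(S\psi)^m + \dE(S\psi)(\e^m)$, bounded by $C\tE\|\psi\|_{W^{2,\infty}(\E;\UU)}$ using the assumed $C^2$-regularity in $\e$ of $\psi$ and of $S$; (ii) the term $-\bdE(S\rho)^m$ coming from the projection error being differenced in $\e$, together with the fact that $a^m$ annihilates $\rho^m$ by definition of $\Pi_{h,N}^m$ in \eqref{eq:def_ell_proj}; and (iii) a term measuring how $\Pi_{h,N}^m$ varies with $m$, which is where the hypothesis $S,R\in C^2(\overline\E;L^\infty(\R))$ is needed to control $\bdE(\Pi_{h,N})^m \psi$ in terms of $\tE$ and the best-approximation error. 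I would bound each piece in the $L^2(\R\times\S)$-norm (or, where the duality pairing with $v_{h,N}$ requires it, in a suitable dual norm against $\|v_{h,N}\|_\UU$), so that $\|\sigma^m\|$ is of the asserted order $\tE\|\psi\|_{W^{2,\infty}(\E;\UU)} + \sup_m\inf_{v_{h,N}}\|\psi(\e^m)-v_{h,N}\|_\UU$, up to a harmless extra difference-quotient of the projection error that is again of best-approximation order.

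Then I would run the discrete energy estimate for $\theta^m$ exactly as in the proof of Lemma~\ref{lem:stability}: test the error equation with $v_{h,N}=\theta^m$, use the discrete product rule \eqref{eq:product} for $\bdE(S(\theta)^2)^m$, discard the nonnegative term $\tE S^{m+1}(\bdE\theta^m)^2$, absorb the $(\bdE S)^m(\theta^m)^2$ term using the uniform bounds on $S$ and $S'$ from Assumption~\ref{ass:1}, note that the skew-symmetric parts of $a^m$ drop out while the boundary and Laplace-Beltrami parts are nonnegative, and apply Young's inequality to the right-hand side $\la\sigma^m,\theta^m\ra$. This yields, for $\tE$ small enough, a recursion of the form $(1-C\tE)\la S^m\theta^m,\theta^m\ra \le \la S^{m+1}\theta^{m+1},\theta^{m+1}\ra + C\tE\|\sigma^m\|^2$, which, together with $\theta^M = -\rho^M$ (since $\psi_{h,N}^M=0$ and $\psi(\e^{\max})=0$, so $\psi(\e^M)-\psi_{h,N}^M = \rho^M + \theta^M$ forces $\theta^M=-\rho^M$, bounded as a best-approximation term), unrolls by a discrete Grönwall argument to $\sup_m \|\theta^m\|_{L^2(\R\times\S)}^2 \le C\big(\|\rho^M\|^2 + \sum_m \tE\|\sigma^m\|^2\big)$. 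Combining with the bound on $\rho^m$ via the triangle inequality gives the claim, with the constant independent of $h,N,\tE$ because all constants entering (coercivity/inf-sup of $a^m$, bounds on $S,S',T$, the projection stability \eqref{eq:quasi-best}) are uniform.

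\textbf{Main obstacle.} The delicate point is item (iii): handling the $m$-dependence of the elliptic projection $\Pi_{h,N}^m$, i.e., estimating $\bdE(\Pi_{h,N}\psi)^m - \Pi_{h,N}^m(\bdE\psi)^m$ (equivalently $\bdE(\rho)^m$) by $C\tE + \text{best-approximation error}$. This requires differentiating the defining relation \eqref{eq:def_ell_proj} with respect to the energy parameter, using that $\e\mapsto a^{(\e)}(\cdot,\cdot)$ depends on $\e$ only through $G(\cdot,\e)$, $T(\cdot,\e)$, $S(\cdot,\e)$ (via $\C$), which are $C^1$ (indeed, with the strengthened hypothesis, $C^2$) in $\e$ by Assumption~\ref{ass:1}, so that $\partial_\e \Pi_{h,N}(\e)$ is itself a bounded quasi-best approximation of $\partial_\e \psi$ plus a correction controlled by $\|\partial_\e a\|$. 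This is essentially the argument of \cite{ES2016}, and I would invoke it in that form; the rest of the proof is routine.
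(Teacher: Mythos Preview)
Your proposal is correct and follows essentially the same route as the paper: the same splitting into projection error $\rho^m$ and discrete error $\theta^m$, the same error equation obtained by combining \eqref{eq:def_ell_proj} with \eqref{eq:def_discrete_solution}, the same key technical step of differentiating \eqref{eq:def_ell_proj} in $\e$ to handle the $m$-dependence of $\Pi_{h,N}^m$ (your ``main obstacle'' is exactly the paper's Step~3), and the same discrete energy argument from Lemma~\ref{lem:stability} to close. One minor simplification over what you wrote: since $\psi(\emax)=0$ and $\Pi_{h,N}^M$ is linear, $\rho^M=0$ and hence $\theta^M=0$, so no initial best-approximation contribution appears.
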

\begin{proof}
The error analysis is based on more or less standard arguments, see e.g. \cite{Thomee}, but for completeness, we present the most important technical details in the following.

\smallskip 
\noindent
\textit{Step 1: Error splitting.} 
Using the abbreviation  $(\Pi_{h,N}\psi)^m =\Pi_{h,N}^m \psi(\e^m)$, we can split the error as 
\begin{align*}
    \psi(\e^m)-\psi^m_{h,N} = [\psi(\e^m)-\Pi_{h,N}^m \psi(\e^m)] + [ (\Pi_{h,N}\psi)^m-\psi^m_{h,N}]. 
\end{align*}
%
The projection error $\psi(\e^m)-\Pi_{h,N}^m \psi(\e^m)$ can be estimated immediately using \eqref{eq:quasi-best}.
For the discrete error component $e_{h,N}^m=(\Pi_{h,N}\psi)^m-\psi^m_{h,N}$, we will extend the stability estimates of Lemma~\ref{lem:stability}.

\smallskip 
\noindent
\textit{Step 2: Equation for $e_{h,N}^m$.}
Using \eqref{eq:def_ell_proj} and \eqref{eq:def_discrete_solution}, we can see that
\begin{align}
    -\la \bdE(Se_{h,N})^m,v_{h,N}\ra + a^m(e_{h,N}^m,v_{h,N}) = \la \dE(S\psi)^m- \bdE((S\Pi_{h,N}\psi)^m),v_{h,N}\ra.
\end{align}
Using the discrete product rule
$\bdE (S\Pi_{h,N}\psi)^m = (\bdE S^m) (\Pi\psi)^{m+1}+ S^m \bdE((\Pi_{h,N}\psi)^m)$,
we can write
\begin{align}
\dE(S\psi)^m&- \bdE(S\Pi_{h,N}\psi)^m 
= (S'-\bdE S)^m (\Pi_{h,N}\psi)^m 
    + (S' \, (\psi-\Pi_{h,N}\psi))^m \label{eq:rhs_err_1}\\
    &+ (\bdE S^m) \, ( (\Pi_{h,N}\psi)^m - (\Pi_{h,N}\psi)^{m+1}) \label{eq:rhs_err_2}\\
    &+ S^m \Big( \big((\dE\psi)^m-(\dE\Pi_{h,N}\psi)^m\big)+ \big((\dE\Pi_{h,N}\psi)^m- \bdE(\Pi_{h,N}\psi)^m \big)\Big) \label{eq:rhs_err_3},
\end{align}
where $(\dE\psi)^m$ and $(\dE\Pi_{h,N}\psi)^m$ denote the evaluation of the corresponding terms in $\e=\e^m$. The terms on the right-hand side of \eqref{eq:rhs_err_1} can be further estimated by
\begin{align}
    \|(S'-\bdE S)^m) \Pi_{h,N}^m \psi(\e^m)\|_{L^2(\R\times\S)}
    &\leq C \tE \|S''\|_{\infty} \|\psi(\e^m)\|_{\UU}, \label{eq:est_rhs_1}\\
    \|(S' (\psi-\Pi_{h,N}\psi))^m\|_{L^2(\R\times\S)}&\leq C \|S'\|_\infty \inf_{v_{h,N}\in\UU_{h,N}} \|\psi(\e^m)-v_{h,N}\|_{\UU}, \label{eq:est_rhs_2}
\end{align}
where we used \eqref{eq:quasi-best} in the last expression. 
For the remaining terms, we need to investigate in more detail the differentiability properties of the mapping $\e\mapsto\Pi_{h,N}(\e) \psi(\e)$, which we do next.

\smallskip 
\noindent
\textit{Step 3: Derivatives of $\Pi_{h,N}(\e)\psi(\e)$.}
By formally differentiating  \eqref{eq:def_ell_proj}, we observe that 
\begin{align}\label{eq:derivative_ell_proj}
    a(\dE \Pi_{h,N} \psi,v_{h,N}) =a(\dE \psi,v_{h,N}) - a'(\Pi_{h,N} \psi -\psi,v_{h,N}),
\end{align}
for all $v_{h,N} \in \UU_{h,N}$,
where the bilinear form $a': \UU\times\UU\to\RR$ is defined by
\begin{align*}
    a'(u,v) = \la T'(\e)\nabla_\s u,\nabla_\s v\ra + \la G'(\e)\cdot \s\times\nabla_\s u,v\ra \qquad \forall u,v\in\UU.
\end{align*}
Here, $G'$ and $T'$ denote the derivatives of $G$ and $T$ with respect to $\e$. 
By Assumption~\ref{ass:1}, the functions $G'$ and $T'$ are bounded.
Therefore, $\dE\Pi_{h,N}(\e) \psi(\e)\in\UU$. 
By rearranging \eqref{eq:derivative_ell_proj} and using \eqref{eq:quasi-best}, we further see
\begin{align}\label{eq:bound_derivative_ell_proj}
    \| \dE\Pi_{h,N}(\e) \psi(\e) - \dE \psi(\e)\|_{\UU}
    \leq C  \Big(\inf_{v_{h,N}\in\UU_{h,N}} \|\psi(\e)-v_{h,N}\|_{\UU}
     + \inf_{v_{h,N}\in\UU_{h,N}} \|\dE \psi(\e)-v_{h,N}\|_{\UU} \Big),
\end{align}
which we can use to estimate the first term in \eqref{eq:rhs_err_3}.
By differentiating the expression \eqref{eq:derivative_ell_proj} another time with respect to $\e$, we similarly obtain that
\begin{align}\label{eq:sec_derivative_ell_proj}
    a(\dE^2 \Pi_{h,N} \psi,v_{h,N}) =a(\dE^2 \psi,v_{h,N})+2 a'(\dE\psi - \dE \Pi_{h,N} \psi,v_{h,N})+a''(\psi-\Pi_{h,N}\psi,v_{h,N}),
\end{align}
for all $v_{h,N}\in\UU_{h,N}$,
where $a''$ is defined similarly to $a'$, but replacing $T'$ and $G'$ by $T''$ and $G''$, respectively.
From \eqref{eq:sec_derivative_ell_proj} and \eqref{eq:bound_derivative_ell_proj} we then deduce that
\begin{align}
     \|\Pi_{h,N}\psi\|_{W^{2,\infty}(\E;\UU)} \leq C \|\psi\|_{W^{2,\infty}(\E;\UU)}. \label{eq:bound_second_derivative_ell_proj}
\end{align}

\noindent
\textit{Step 4: Putting it all together.} Estimate \eqref{eq:bound_second_derivative_ell_proj} implies that
\begin{align}\label{eq:est_rhs_3}
    \|(\Pi_{h,N}\psi)^{m+1}- (\Pi{_h,N}\psi)^m\|_{\UU} + \| (\dE \Pi_{h,N}\psi)^m- \bdE(\Pi_{h,N}\psi)^m\|_{\UU} \leq \tE C \|\psi\|_{W^{2,\infty}(\E;\UU)},
\end{align}
which we use to estimate the term in \eqref{eq:rhs_err_2} and the second term in \eqref{eq:rhs_err_3}.
By combination of the previous estimates \eqref{eq:est_rhs_1}, \eqref{eq:est_rhs_2}, \eqref{eq:bound_derivative_ell_proj}, and \eqref{eq:est_rhs_3}, we can then bound
\begin{align*}
    \|\dE(S\psi)^m- \bdE(S\Pi_{h,N}\psi)^m\|_{L^2(\R\times\S)}\leq C \Big(\tE\|\psi\|_{W^{2,\infty}(\E;\UU)} +  \inf_{v_h\in\UU_{h,N}} \|\psi(\e^m)-v_h\|_{\UU}  \Big).
\end{align*}
In combination with \eqref{eq:energy} for $\bar \q^m=(\dE(S\psi))^m- \bdE((S\Pi_h\psi)^m)$, we thus obtain
\begin{align}
    \sup_{m}\|(\Pi_{h,N}\psi)^m-\psi^m_{h,N}\|_{L^2(\R\times\S)} \leq C \Big(\tE\|\psi\|_{W^{2,\infty}(\E;\UU)} + \sup_{m} \inf_{v_h\in\UU_{h,N}} \|\psi(\e^m)-v_h\|_{\UU}  \Big).
\end{align}
Together with the previous estimates, this finally proves the bounds of the theorem. 
\end{proof}

\begin{remark}
    The constants in Lemma~\ref{lem:projection} and Theorem~\ref{thrm: error} depend on the bounds for the coefficient functions $S$, $G$ and $T$ and their derivatives. This dependence could be worked out explicitly by careful inspection of all steps in the previous proofs, but it does not provide too much additional insight. Instead of uniform time steps $\tE$, one could also use adaptive time steps in the implementation, which could be included in the analysis with minor modifications; compare to Remark~\ref{rem:local_de}. 
    %
    %
\end{remark}

\section{Numerical results}
\label{sec:num}

In the following numerical tests, we first validate the convergence estimates of Theorem~\ref{thrm: error}, and then illustrate the effect of the Lorentz force on the particle distributions in a typical setting of relevance in applications.
%
%
For both test problems, we assume that the particle distribution $\psi$ is homogeneous in the third space direction, which is a common setting in many transport benchmark problems~\cite{Brunner2005}. This facilitates the implementation and allows to consider a spatially two-dimensional domain $\R \subset \RR^2$, while $\S=\{\s \in \RR^3 : |s| = 1\}$ and $\E=(\emin,\emax)$ are defined as before. Note that the resulting solutions still have physical meaning in three dimensional space. All results of the previous section thus translate verbatim to this setting.

\medskip 
\noindent 
\textbf{Remarks on the numerical realization.}
The implementation of the $P_N$-finite element method for our quasi-two-dimensional model problems can be done as discussed in \cite{ES2012,ES2016}. 
Similar to the integrals involving $\s \cdot \nabla_r$, the additional terms representing  $\G \cdot \s \times \nabla_s \psi$ and $T \Delta_s \psi$ lead to sparse matrices in block-tensor format. 
Every step $m$ of method \eqref{eq:def_discrete_solution} then amounts to the solution of a large relatively sparse linear system. In our numerical tests, we solved this system using a direct sparse solver.

%

\subsection{Validation of error estimates}
We consider the spatial domain $\R = (0,1)^2$ and the energy interval $\E = (1,2)$. 
The model parameters are defined as $S(x,y,\e) = (1+x^2+y^2)\e^3$, $T(x,y,\e) = (1+x^2+y^2)\e^2$, and $G = (0,0,1)$. The source term $\q$ is chosen such that the solution of \eqref{eq:fp}--\eqref{eq:ibc} is given by
\begin{align}
    \label{eq: sol_manufactured}
    \psi(r,s,\e) = \chi(x,y)f(\e)\sum_{\ell=0}^\infty\sum_{m=-l}^\ell c_\ell^mY_\ell^m(s),
\end{align}
with $r=(x,y)$, $\chi(x,y) = \sin(\pi x)\sin(\pi y)$, $f(\e) = 1-e^{\e-2}$, and $c_\ell^m = \frac{2^{-\ell}}{(2\ell+1)(1+\sqrt{\ell(\ell+1)})}$.
The spherical harmonics $Y_\ell^m$ are normalized such that $\| Y_\ell^m\|_{L^2(\S)}=1$. 
Let us note that the function $\psi$ also satisfies the homogeneous boundary conditions \eqref{eq:ibc} used in our analysis. 

\medskip 
\noindent 
\textbf{Approximation error.}
%
Before presenting our numerical tests, let us briefly investigate the best approximation error arising in Theorem \ref{thrm: error}. For this purpose we first define the truncated series 
\begin{align}\label{eq:truncated}
    \psi_N(r,s,\e) = \chi(x,y)f(\e)\sum_{\ell=0}^N\sum_{m=-\ell}^\ell c_\ell^m Y_\ell^m(s).
\end{align}
Recalling the definition of the $\UU$-norm in \eqref{eq:redef_U_norm} then allows to estimate the truncation error by
\begin{align} \label{eq:trucn}
    \|\psi(\e^m)-\psi_N(\e^m)\|_\UU &\leq  C 2^{-N}
\end{align}
with a constant $C>0$ that is independent of the discretization parameters. 
We further denote by $\psi_{h,N}(\e) = \psi_{h,N}^+(\e) + \psi_{h,N}^-(\e)$ the discrete approximation for $\psi_N$ in $\UU_{h,N}$ defined by by piecewise linear interpolation resp. piecewise constant projection of $\phi_N^\pm$ with respect to the spatial coordinate. 
%
By basic error estimates for these finite-element projections, we obtain
\begin{align} \label{eq:proj}
    \|\psi_N(\e^m)-\psi_{h,N}(\e^m)\|_\UU \leq Ch
\end{align}
with a constant $C$ that is again independent of the discretization parameters.
By combination of these estimates, the triangle inequality, and the regularity of $f(\e)$, the estimate of Theorem~\ref{thrm: error} yields
\begin{align}\label{eq:error_bound_manufactured}
   e_{h,N,\tE} := \sup_{0\leq m\leq M}\|\psi(\e^m)-\psi_{h,N}^m\|_{L^2(\R\times\S)}\leq C(\tE+h+2^{-N})
\end{align}
for some constant $C$ that is independent of the discretization parameters $\tE,h$ and $N$. We thus expect first order convergence in $\tE$ and $h$, and exponential convergence with respect to $N$. These rates are optimal in view of the approximation properties of the $P_N$-finite element space and the energy-differencing scheme. 

\medskip 
\noindent 
\textbf{Numerical results.}
In view of the estimate \eqref{eq:error_bound_manufactured}, it makes sense to choose $h$ proportional to $\tE$ and $N$ proportional to $|\log_2(\tE)|$. 
For our numerical tests, we choose $\tE=1/2^j$ with $j=4,\ldots, 8$ 
and set $h=\tE/2$ and $N=2|\log_2(\tE)|-7=1,3,5,7,9$ accordingly.
%
In Table \ref{tab:errors}, we list the errors 
$$e_{\tE} := e_{h(\tE),N(\tE),\tE}$$ 
obtained in our simulations together with the estimated orders of convergence $eoc = \log_2(e_\tE/e_{2\tE})$.
\begin{table} [ht!]
\begin{center}
\caption{Errors $e_{\tE}$ for different values of $\tE$, with estimated order of convergence (eoc).}
    \begin{tabular}{c||c|c|c|c|c}
    $1/\tE$ & $16$ & $32$ & $64$ & $128$ & $256$ \\
    \hline
    $e_{\tE}$ & $0.1411$ & $0.0744$ & $0.0384$ & $0.0195$ & $0.0098$ \\
    \hline
    $eoc$ & $$-\!-\!-$$ & $0.92$ & $0.95$ & $0.98$ & $0.99$\\
    \end{tabular}
    \label{tab:errors}
\end{center}
\end{table}
From our convergence analysis above and the balanced choice of the discretization parameters, we can expect that $e_{\tE} = O(\tE)$, which is in perfect agreement with the actual results obtained in our computations.

\subsection{Effect of the magnetic field}
Our second test problem is motivated by applications in magnetic resonance imaging guided radiotherapy~\cite{Hensel2006,St-Aubin_2015}. 
The region under consideration is irradiated by a primary photon beam that interacts with the tissue and produces secondary electrons with a distribution peaked in the beam direction. These charged particles move through the tissue; they undergo inelastic scattering and absorption, resulting in the deposition of radiation dose, which is the quantity of interest. 
In the presence of a magnetic field, the electrons further experience a Lorentz force resulting in a displacement of the absorbed radiation dose. 

\medskip 
\noindent 
\textbf{Physical background.}
The setup is inspired by \cite{Hensel2006}. We consider a cube of size $L=\qty{30}{\centi\meter}$ consisting of water. The domain is irradiated by an incident beam of primary particles with an energy of about $\qty{10}-\qty{30}{\mega\electronvolt}$. Through inelastic scattering with the background medium, a distribution of secondary electrons with density $q(\r,\s,\e)$ is generated, which has a peak in the direction of propagation of the primary beam. The Fokker-Planck equation~\eqref{eq:fp} describes the steady state distribution $\psi(\r,\s,\e)$ of secondary electrons after propagation, scattering, and absorption in the medium. 
The coefficients $S=S_M$ and $T=T_M+T_\text{Mott}$ 
denote, respectively, the Møller scattering stopping power and Laplace-Beltrami coefficient, and the Mott scattering Laplace-Beltrami coefficient, and they vary strongly as a function of the kinetic energy $\e$; see~\cite[equations (B.2), (B.4) and (B.6)]{Hensel2006} for their expression.
In the presence of a constant magnetic field $B$ of about $\qty{1}{\tesla}$ pointing in the $z$-direction, the electrons experience a Lorentz force, which leads to a coefficient $G=(0,0,G_z)$ as in
\cite{St-Aubin_2015}.
Electrons moving in the $x$-direction will thus also be displaced in the $y$-direction. 
The quantity of interest is the radiation dose, i.e., the amount of energy per volume, deposited within the domain, which is given by $D(\r) =\frac{4\pi T_I}{\rho}\int_0^\infty S_M(r,\e)\Psi(r,\e)\d \e$. 
Here $T_I$ is the irradiation time, $\rho$ the tissue density, and  $\Psi(r,\e) = \frac{1}{4\pi} \int_\S\psi_e(r,\s,\e)\d s$ the angular average of the electron density.
\begin{figure}[ht]
\centering
\includegraphics[width=0.32\textwidth]{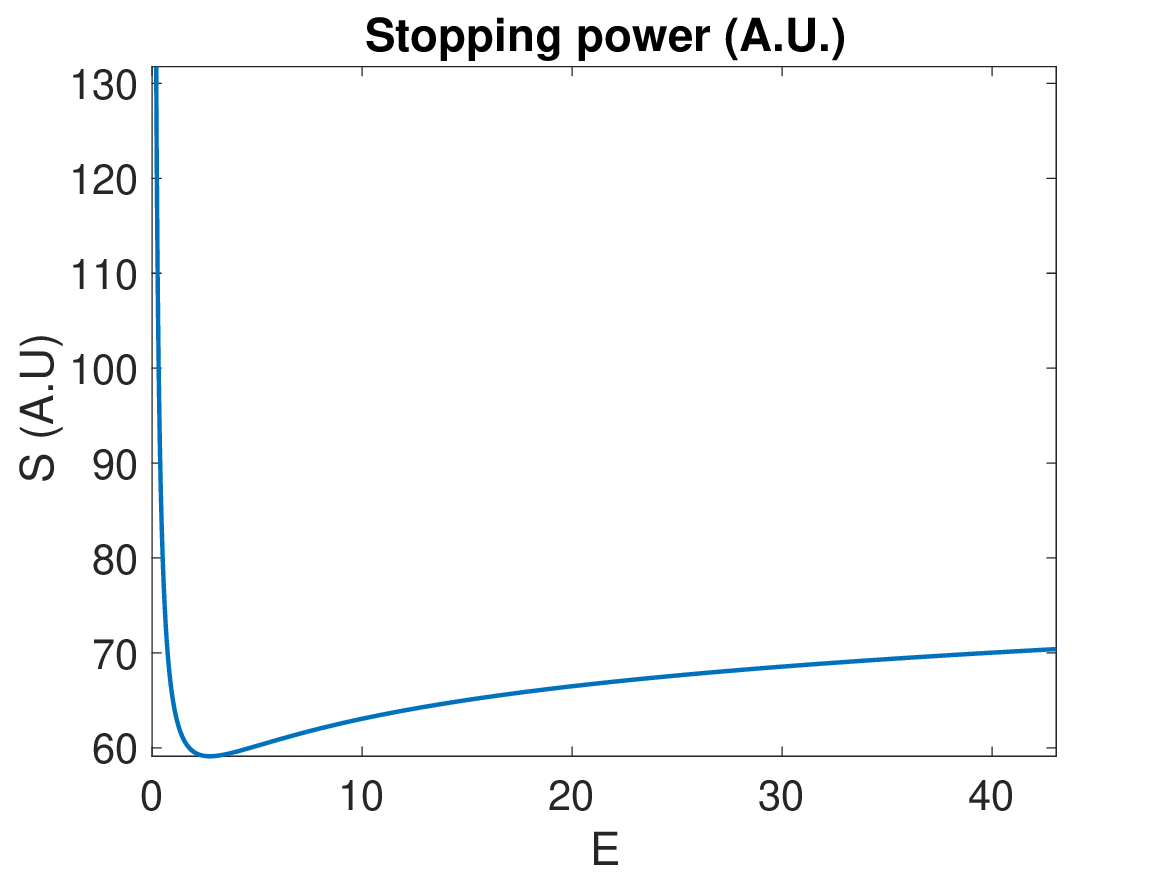}
\hfill
\includegraphics[width=0.32\textwidth]{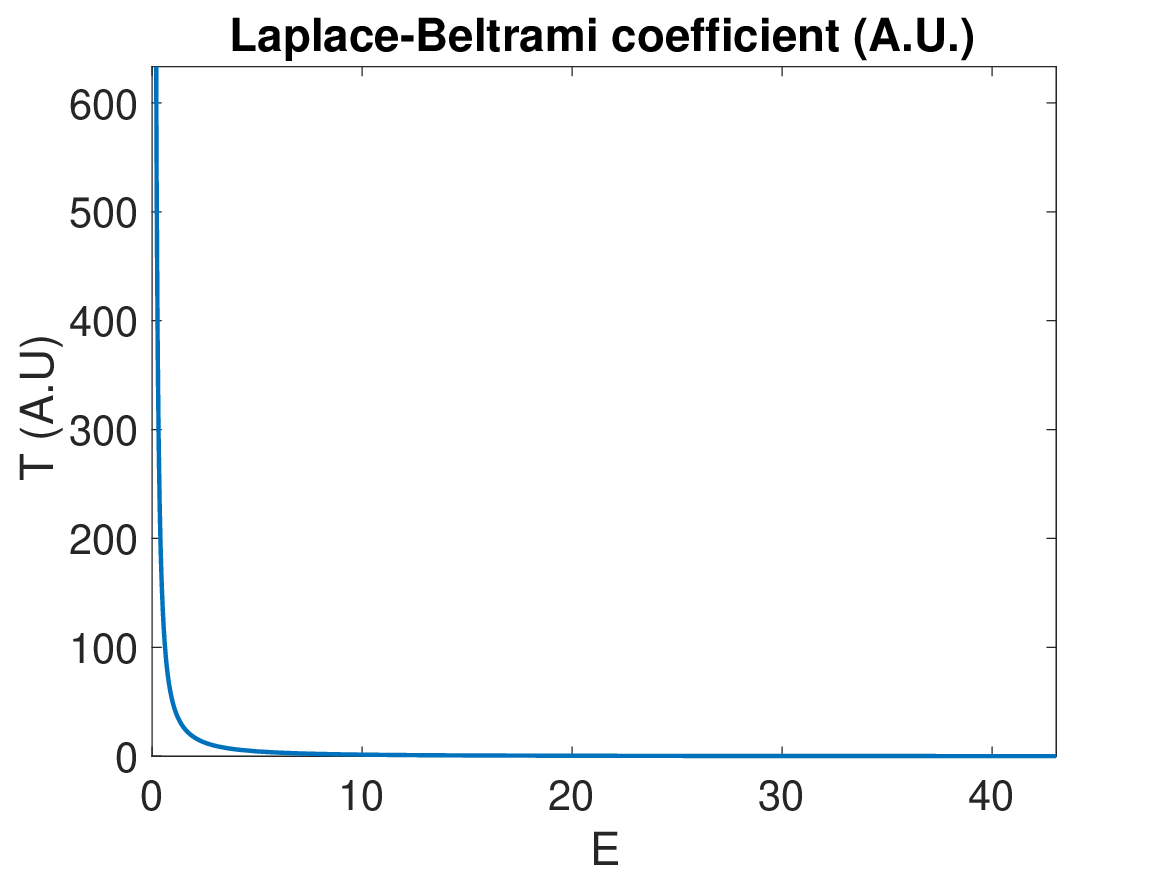}
\hfill
\includegraphics[width=0.32\textwidth]{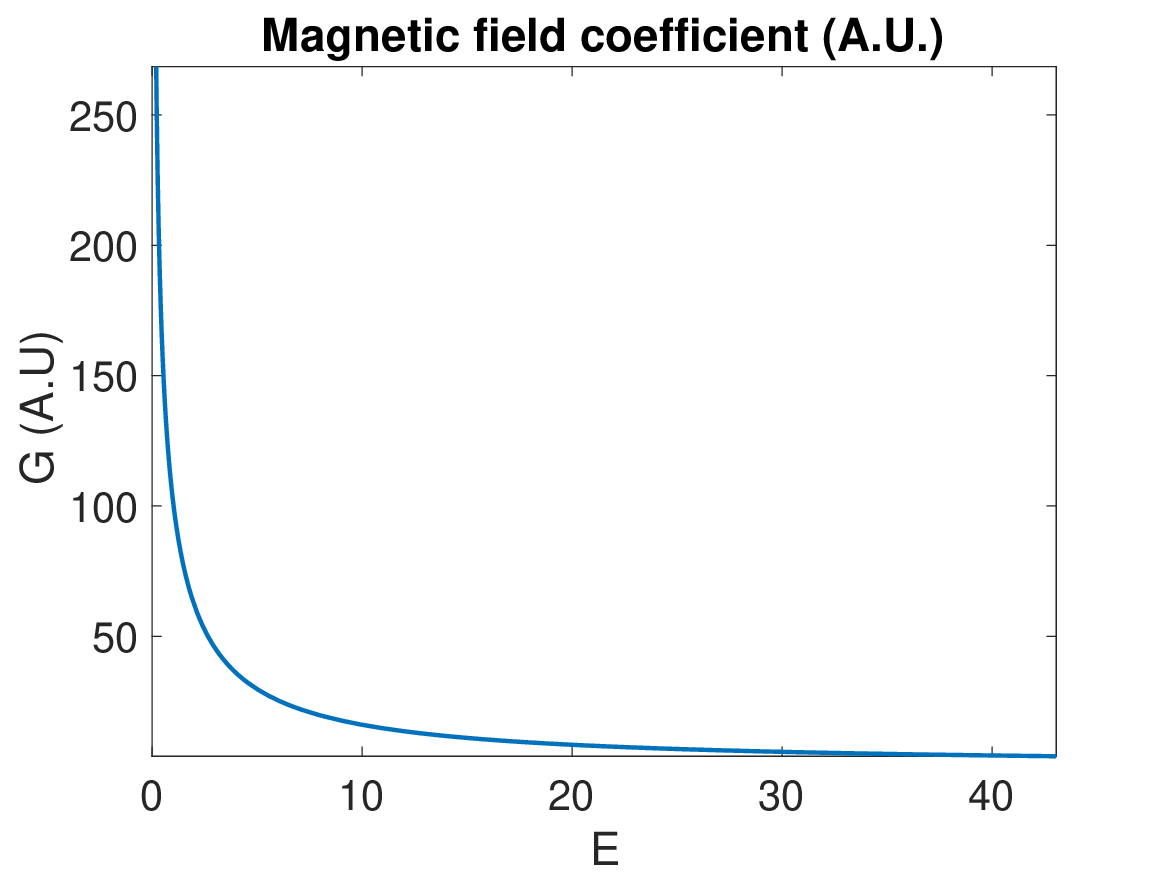}
\caption{Energy dependence of the coefficients $S$, $T$ and $G$ after rescaling in arbitrary units (A.U.).}
\label{fig: Coefficients}
\end{figure}

\noindent 
\textbf{Mathematical model problem.}
%
%
After non-dimensionalization and assuming homogeneity of all quantities in the third space direction, we consider the following model problem in our numerical experiment.
The computational domain is chosen as the unit square $\R=(0,1)^2$ and the range of rescaled energies is defined as $\E=(0.2,44)$.
The source density for secondary electrons is given by 
\begin{align*}
    \q(r,s,\epsilon) = e^{-30|r-r_0|^2} e^{-200|s-s_0|^2}e^{-\frac{1}{2}|\e-\e_0|^2},
\end{align*}
with $r_0=(1/2,1/2)$, $\e_0=30$, and $s_0$ denoting the unit vector in positive $x$-direction.
%
The remaining model parameters have a strong dependence on energy which is depicted in Figure \ref{fig: Coefficients}.
For the dose calculation, we choose the irradiation time such that $\frac{T_I}{\rho}=1$ in rescaled variables.

\medskip 
\noindent 
\textbf{Numerical results.}
For our simulations, we use a spatial mesh $\T_h$ with $18\,432$ spatial elements, a maximal spherical harmonics degree of $N=7$, and $M=400$ uniform steps for discretizing the energy interval. 
In Figure \ref{fig: Dose} we display the computed dose for simulations with and without magnetic field. 
As expected from the physical context, the presence of a magnetic field in the positive $z$-direction results in a counter-clockwise rotation of the electron trajectories and a corresponding displacement of the dose deposited in the medium. 

\begin{figure}[ht]
\centering
\hfill
\includegraphics[width=0.45\textwidth]{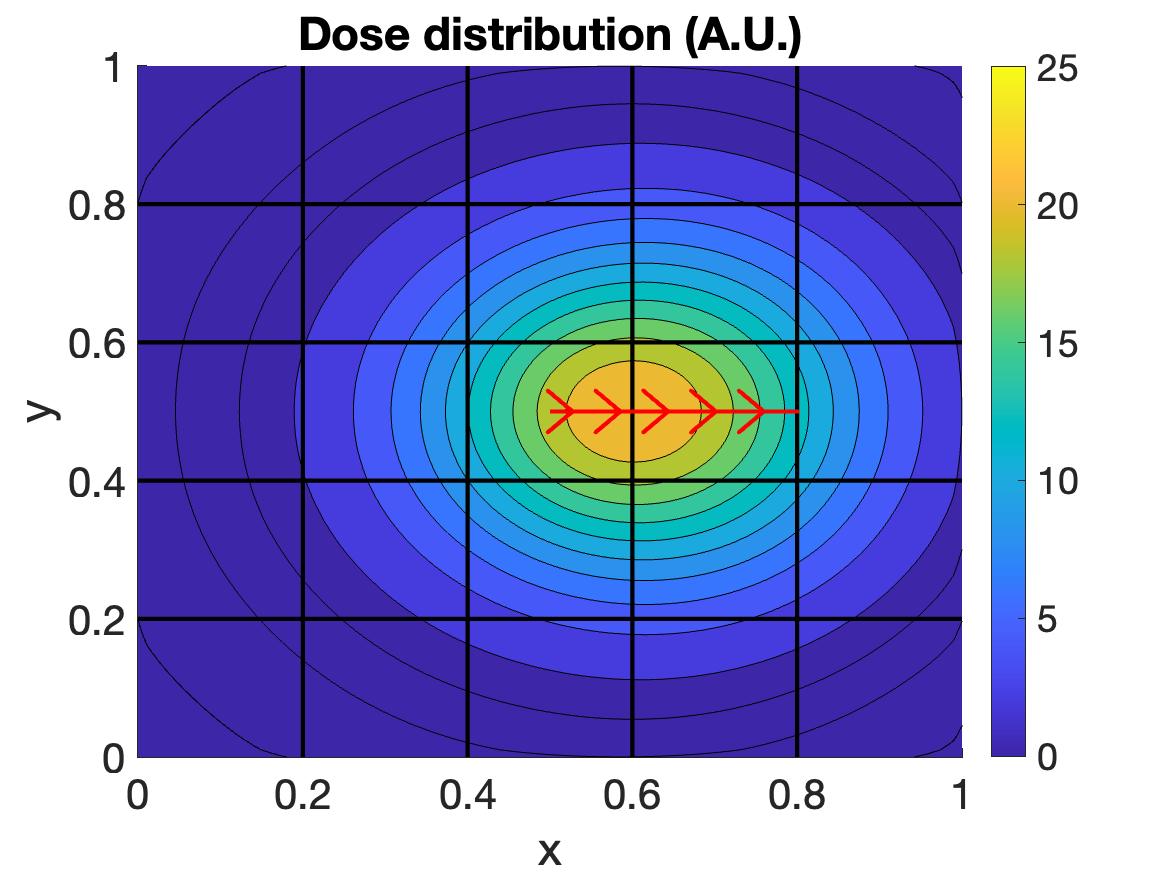}
\hfill
\includegraphics[width=0.45\textwidth]{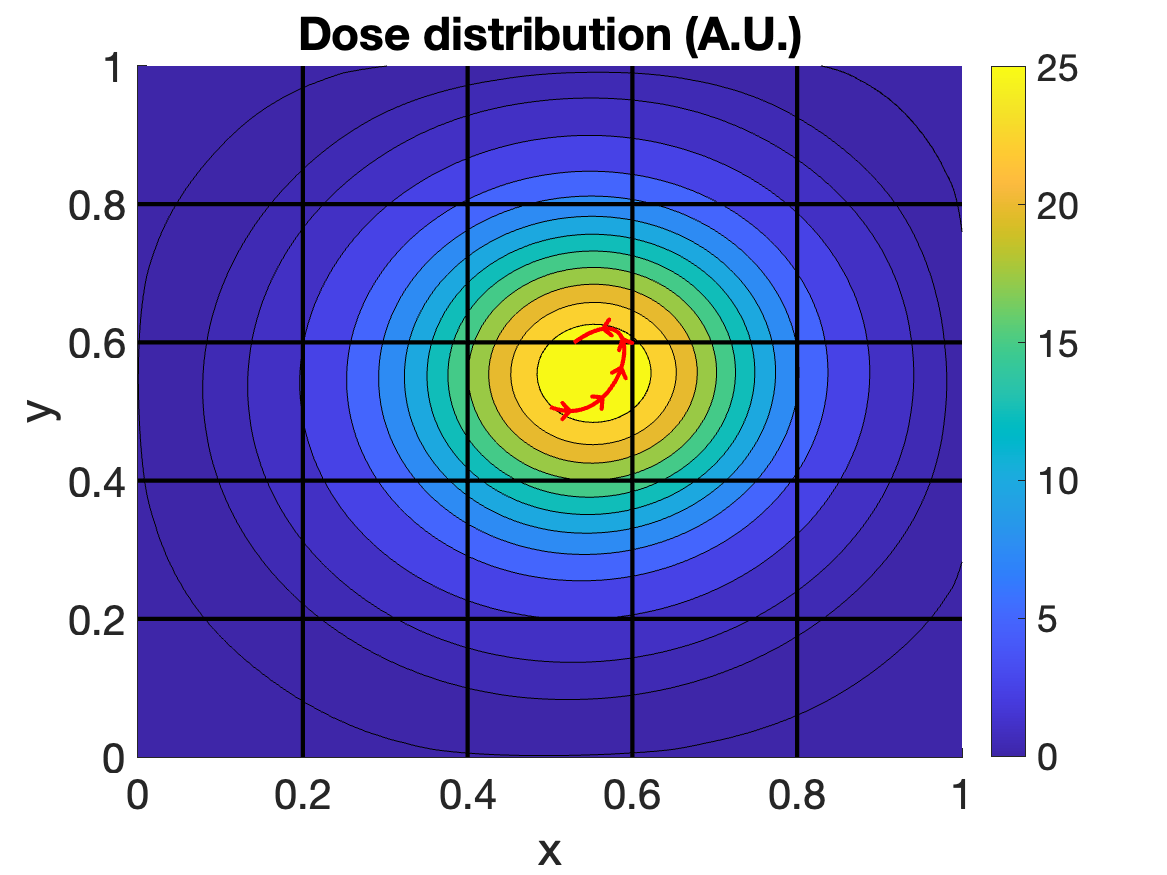}
\hfill
\caption{Dose distribution $D(r)$ (arbitrary units) across the domain without (left) and with (right) magnetic field. The red arrows indicate the trajectory $\e \mapsto {\rm argmax}_{r}\Psi(r,\e)$ of the peak of the dose distribution.}
\label{fig: Dose}
\end{figure}

\noindent 
\textbf{Interpretation in physical terms.}
By rescaling the results to physical quantities, one can see that a magnetic field $B_z=\qty{1}{\tesla}$ results in a higher localization and a shift of the peak of the deposited radiation dose by about $\qty{2.5}{\centi\meter}$, which seems rather significant for the application under consideration.



\begin{funding}
VB and MS acknowledge support by the Dutch Research Council (NWO) via the Mathematics Clusters grant no. 613.009.133.
\url{http://dx.doi.org/10.13039/501100003246}, "Nederlandse Organisatie voor Wetenschappelijk Onderzoek". 
HE was supported by the Austrian Science Fund (FWF) via grant 10.55776/F90.
\end{funding}

\bibliographystyle{plain}
\bibliography{literature}

\begin{thebibliography}{10}

\bibitem{Ackroyd_1997}
R.~T. Ackroyd.
\newblock {\em Finite Element Methods for Particle Transport: {A}pplications to Reactor and Radiation Physics}.
\newblock Taylor \& Francis Inc., 1997.

\bibitem{Agoshkov_1998}
V.~Agoshkov.
\newblock {\em Boundary Value Problems for Transport Equations}.
\newblock Modeling and Simulation in Science, Engineering and Technology. Birkh\"auser, Boston, 1998.

\bibitem{Akkermans2007}
E.~Akkermans and G.~Montambaux.
\newblock {\em Mesoscopic Physics of Electrons and Photons}.
\newblock Cambridge University Press, 2007.

\bibitem{Arendt_2023}
W.~Arendt, I.~Chalendar, and R.~Eymard.
\newblock Lions' representation theorem and applications.
\newblock {\em J. Math. Anal. Appl.}, 522:Paper No. 126946, 2023.

\bibitem{Babuska_1971}
I.~Babu\v{s}ka.
\newblock Error-bounds for finite element method.
\newblock {\em Numer. Math.}, 16:322--333, 1970/71.

\bibitem{Bedford2023}
J.~L. Bedford.
\newblock A discrete ordinates {B}oltzmann solver for application to inverse planning of photons and protons.
\newblock {\em Phys. Med. Biol.}, 68:185019, 2023.

\bibitem{Bouchard_2015}
H.~Bouchard and A.~Bielajew.
\newblock {L}orentz force correction to the {B}oltzmann radiation transport equation and its implications for {M}onte {C}arlo algorithms.
\newblock {\em Phys. Med. Biol.}, 60:4963--4971, 2015.

\bibitem{Brezis2011}
H.~Brezis.
\newblock {\em Functional Analysis, {S}obolev Spaces and Partial Differential Equations}.
\newblock Springer New York, 2011.

\bibitem{Brunner2005}
Thomas~A Brunner.
\newblock Forms of approximate radiation transport.
\newblock 2005.

\bibitem{Chandrasekhar1943}
S.~Chandrasekhar.
\newblock Stochastic problems in physics and astronomy.
\newblock {\em Reviews of Modern Physics}, 15(1):1--89, 1943.

\bibitem{Stefanov:1999}
M.~Choulli and P.~Stefanov.
\newblock An inverse boundary value problem for the stationary transport equation.
\newblock {\em Osaka J. Math.}, 36:87--104, 1999.

\bibitem{Ciarlet_1978}
P.~G. Ciarlet.
\newblock {\em The finite element method for elliptic problems}.
\newblock SIAM, Philadelphia, PA, 2002.

\bibitem{Dautray6}
R.~Dautray and J.~L. Lions.
\newblock {\em Mathematical Analysis and Numerical Methods for Science and Technology. Evolution Problems II}.
\newblock Springer, Berlin, 1993.

\bibitem{de_Pooter_2021}
J.~de~Pooter, I.~Billas, L.~de~Prez, S.~Duane, R.-P. Kapsch, C.~P. Karger, B.~van Asselen, and J.~Wolthaus.
\newblock Reference dosimetry in {MRI}-linacs: evaluation of available protocols and data to establish a code of practice.
\newblock {\em Phys. Med. Biol.}, 66:05TR02, 2021.

\bibitem{Degond_1986}
P.~Degond.
\newblock Global existence of smooth solutions for the {V}lasov-{F}okker-{P}lanck equation in {$1$} and {$2$} space dimensions.
\newblock {\em Ann. Sci. \'{E}cole Norm. Sup.}, 19:519--542, 1986.

\bibitem{Degond_1987}
P.~Degond and S.~Mas-Gallic.
\newblock Existence of solutions and diffusion approximation for a model {F}okker-{P}lanck equation.
\newblock {\em Transport Theory Statist. Phys.}, 16:589--636, 1987.

\bibitem{ES2012}
H.~Egger and M.~Schlottbom.
\newblock A mixed variational framework for the radiative transfer equation.
\newblock {\em Math. Mod. Meth. Appl. Sci.}, 22:1150014, 2012.

\bibitem{ES2016}
H.~Egger and M.~Schlottbom.
\newblock A class of {G}alerkin schemes for time-dependent radiative transfer.
\newblock {\em SIAM J. Numer. Anal.}, 54:3577--3599, 2016.

\bibitem{Frank2010}
M.~Frank, M.~Herty, and A.~N. Sandjo.
\newblock Optimal radiotherapy planning governed by kinetic equations.
\newblock {\em Math. Mod. Meth. Appl. Sci.}, 20:661--678, 2010.

\bibitem{Frank2008}
M.~Frank, M.~Herty, and M.~Sch{\"a}fer.
\newblock Optimal treatment planning in radiotherapy based on {Boltzmann} transport calculations.
\newblock {\em Math. Model. Meth. Appl. Sci.}, 18:573--592, 2008.

\bibitem{Gifford2006}
K.~A. Gifford, J.~L. Horton~Jr., T.~A. Wareing, G.~Failla, and F.~Mourtada.
\newblock Comparison of a finite-element multigroup discrete-ordinates code with {M}onte {C}arlo for radiotherapy calculations.
\newblock {\em Phys. Med. Biol.}, 51(9):2253--2265, 2006.

\bibitem{Han2013}
W.~Han, Y.~Li, Q.~Sheng, and J.~Tang.
\newblock A numerical method for generalized {F}okker-{P}lanck equations.
\newblock In {\em Recent advances in scientific computing and applications}, pages 171--179. Amer. Math. Soc., Providence, RI, 2013.

\bibitem{Hensel2006}
H.~Hensel, R.~Iza-Teran, and N.~Siedow.
\newblock Deterministic model for dose calculation in photon radiotherapy.
\newblock {\em Phys. Med. Biol.}, 51:675--693, 2006.

\bibitem{Herty2012}
M.~Herty, C.~J{\"o}rres, and A.~N. Sandjo.
\newblock Optimization of a model {F}okker-{P}lanck equation.
\newblock {\em Kinetic and Related Models}, 5:465--503, 2012.

\bibitem{Ishimaru1978}
A.~Ishimaru.
\newblock {\em Single Scattering and Transport Theory}, volume~1.
\newblock Academic Press, New York, 1978.

\bibitem{Kanschat2009}
G.~Kanschat.
\newblock Solution of radiative transfer problems with finite elements.
\newblock In G.~Kanschat, E.~Meink{\"o}hn, R.~Rannacher, and R.~Wehrse, editors, {\em Numerical Methods in Multidimensional Radiative Transfer}, pages 49--98, Berlin, Heidelberg, 2009. Springer Berlin Heidelberg.

\bibitem{Larsen1998}
E.W. Larsen, M.M. Miften, B.A. Fraass, and I.A.D. Bruinvis.
\newblock Electron dose calculations using the method of moments.
\newblock {\em Med. Phys.}, 24(1):111--125, 1998.

\bibitem{Lewis_1984}
E.~E. Lewis and W.~F. Miller~Jr.
\newblock {\em Computational Methods of Neutron Transport}.
\newblock John Wiley \& Sons Inc., 1984.

\bibitem{Lions61}
J.-L. Lions.
\newblock {\em \'Equations diff\'erentielles op\'erationnelles et probl\`emes aux limites}.
\newblock Springer-Verlag, Berlin-G\"ottingen-Heidelberg, 1961.

\bibitem{Manteuffel1999}
T.~A. Manteuffel, K.~J. Ressel, and G.~Starke.
\newblock A boundary functional for the least-squares finite-element solution of neutron transport problems.
\newblock {\em SIAM J. Numer. anal.}, 37:556--586, 1999.

\bibitem{Modest2003}
M.~F. Modest.
\newblock {\em Radiative Heat Transfer}.
\newblock Academic Press, Amsterdam, 3rd edition, 2013.

\bibitem{Pomraning1992}
G.~C. Pomranging.
\newblock The {F}okker-{P}lanck operator as an asymptotic limit.
\newblock {\em M3AS}, 2:21--36, 1992.

\bibitem{Roubicek}
T.~Roubicek.
\newblock {\em Nonlinear Partial Differential Equations with Applications}.
\newblock Springer, 2013.

\bibitem{ShengHan2013}
Q.~Sheng and W.~Han.
\newblock Well-posedness of the {F}okker--{P}lanck equation in a scattering process.
\newblock {\em Journal of Mathematical Analysis and Applications}, 406(2):531--536, 2013.

\bibitem{Showalter_1997}
R.~E. Showalter.
\newblock {\em Monotone operators in {B}anach space and nonlinear partial differential equations}, volume~49 of {\em Mathematical Surveys and Monographs}.
\newblock American Mathematical Society, Providence, RI, 1997.

\bibitem{St-Aubin_2016}
J.~St.~Aubin, A.~Keyvanloo, and B.~Fallone.
\newblock Discontinuous finite element space-angle treatment of the first order linear {B}oltzmann transport equation with magnetic fields: {A}pplication to {MRI}-guided radiotherapy.
\newblock {\em Med. Phys.}, 43:195--204, 2016.

\bibitem{St-Aubin_2015}
J.~St.~Aubin, A.~Keyvanloo, O.~Vassiliev, and B.~Fallone.
\newblock A deterministic solution of the first order linear {B}oltzmann transport equation in the presence of external magnetic fields.
\newblock {\em Med. Phys.}, 42:780--793, 2015.

\bibitem{Swan_2021}
A.~Swan, R.~Yang, O.~Zelyak, and J.~St.~Aubin.
\newblock Feasibility of streamline upwind {P}etrov-{G}alerkin angular stabilization of the linear {B}oltzmann transport equation with magnetic fields.
\newblock {\em Biomed. Phys. Engrg. Express}, 7:015017, 2020.

\bibitem{Thomee}
V.~Thom{\'e}e.
\newblock {\em Galerkin Finite Element Methods for Parabolic Problems}.
\newblock Springer Berlin, Heidelberg, 2006.

\bibitem{Vassiliev_2010}
O.~Vassiliev, T.~Wareing, J.~McGhee, G.~Failla, M.~Salehpour, and F.~Mourtada.
\newblock Validation of a new grid-based {B}oltzmann equation solver for dose calculation in radiotherapy with photon beams.
\newblock {\em Phys. Med. Biol.}, 55:581--598, 2010.

\end{thebibliography}

\end{document}